\documentclass[reqno,a4paper,11pt]{amsart}
\usepackage[utf8]{inputenc}

\usepackage[english]{babel}

\usepackage[T1]{fontenc}
\usepackage[normalem]{ulem}

\usepackage{amssymb}
\usepackage{amsmath,amsfonts}
\usepackage[tikz]{bclogo}
\usetikzlibrary{decorations.pathreplacing,calligraphy,quotes,calc,trees,positioning,arrows,arrows.meta,chains,shapes.geometric,
    decorations.pathreplacing,decorations.pathmorphing,shapes,
    matrix,shapes.symbols}
\usepackage[top=3cm,bottom=3cm,left=2.5cm,right=2.5cm]{geometry}

\usepackage{lmodern}
\usepackage{textcomp}
\usepackage{mathtools, bm}
\usepackage[unq]{unique}
\usepackage{comment}
\usepackage[breaklinks]{hyperref}
\usepackage{float}

\usepackage[numbers]{natbib}  

\usepackage{graphicx} % Required for inserting images

\usepackage{amsthm}

\usepackage{subcaption}

\usepackage[foot]{amsaddr}

\usepackage{cleveref}

\usepackage{bbm}

\usepackage{enumitem}
\usepackage{thmtools}
\usepackage{thm-restate}

\newtheorem{theorem}{Theorem}[section]
\newtheorem{lemma}[theorem]{Lemma}
\newtheorem{proposition}[theorem]{Proposition}

\newtheorem{Question}[theorem]{Question}

\newtheorem{claim}[theorem]{Claim}

\newtheorem{corollary}[theorem]{Corollary}

\theoremstyle{definition}

\newcommand{\one}{\mathbf{1}}
\DeclareMathOperator{\tr}{tr}
\DeclareMathOperator{\rank}{rank}

\newcommand{\calG}[2]{\mathcal{G}(#1, #2)}

\title{On sparse universally rigid graphs in $\mathbb{R}$}

\author{Ole Gabsdil$^{\star}$}
\email{ole.gabsdil@epfl.ch}

\author{Ezra Hirschi$^{\star}$}
\email{ezra.hirschi@epfl.ch}

\author{Julien Portier$^{\star}$}
\email{julienportier@laposte.net}

\address{$^{\star}$Department of Mathematics, EPFL, Lausanne, Switzerland}

\date{October 2026}

\begin{document}

\begin{abstract}
A graph is said to be 1-universally rigid if, for any generic realization of its vertices on the real line, the realization cannot be deformed out of the line in any higher-dimensional Euclidean space while preserving all edge lengths.
In this paper, we investigate the sparsity of 1-universally rigid graphs in different manners.

First, we show that an Erd\H{o}s-R\'enyi random graph $G(n,p)$ is 1-universally rigid with high probability whenever $p \geq C\log n \cdot \log\log n/n$ for some sufficiently large constant $C>0$.
These techniques also imply the existence of 1-universally rigid graphs of arbitrarily large girth, which resolves a problem suggested by Dantas e Moura, Jord\'an, and Silverman in a non-constructive manner.
We also study how sparse a 1-universally rigid graph can be, proving that there exists an absolute constant $\varepsilon>0$ such that every sufficiently large 1-universally rigid graph on $n$ vertices has at least $(3/2+\varepsilon)n$ edges.
This breaks the natural $3/2$ barrier arising from degree-2 extensions and makes progress towards a question of Jord\'an and Nguyen.
\end{abstract}

\maketitle

\section{Introduction}
A \textit{$d$-dimensional framework} $(G,\Phi)$\footnote{The more common notation throughout the literature is $(G,p)$, however as we will work with random graphs later on, we reserve the variable $p$ for probabilities.} is a pair consisting of a graph $G = (V,E)$ as well as a mapping $\Phi \colon V \rightarrow \mathbb{R}^d$ of its vertices into the euclidian space $\mathbb{R}^d$.
Two frameworks $(G,\Phi)$ and $(G,\Psi)$ are called \textit{equivalent} if $\lVert \Phi(u) - \Phi(v)\rVert_2 = \lVert \Psi(u) - \Psi(v)\rVert_2$ holds for all pairs of adjacent vertices $u$ and $v$.
They are called \textit{congruent} if $\lVert \Phi(u) - \Phi(v)\rVert_2 = \lVert \Psi(u) - \Psi(v)\rVert_2$ holds for all pairs of vertices $u$ and $v$, no matter whether they are connected via an edge or not.
Equivalent frameworks need not be congruent, since the distances between nonadjacent vertices may differ. This raises a natural question: when do the edge lengths of a framework uniquely determine the positions of all its vertices, up to an isometry of the ambient space?

Formally, we say that a $d$-dimensional framework $(G,\Phi)$ is \textit{globally rigid} in $\mathbb{R}^d$, if every equivalent $d$-dimensional framework $(G,\Psi)$ is congruent to $(G,\Phi)$.
We further say that a graph $G$ is \textit{$d$-globally rigid}, if $(G,\Phi)$ is globally rigid for all generic $d$-dimensional mappings $\Phi$, where generic means that the set of all coordinates $\Phi(V)$ is algebraically independent over the rationals.
This has received considerable attention in recent years, see for instance \cite{Connelly2009questions,connelly2011conjectures,Connelly2013,garamvolgyi2022global,garamvolgyi2023minimally,gortler2010characterizing,Hitting_time_higher_dimension}, to name a few.

A stronger notion of rigidity, which will be the focus of this work, is \textit{universal rigidity}. 
A $d$-dimensional framework $(G,\Phi)$ is called \textit{universally rigid}, if for any $d' \geq d$ the following holds: Interpreted as a $d'$-dimensional framework, every equivalent $d'$-dimensional framework is already congruent to $(G,\Phi)$.
In other words, universal rigidity requires a $d$-dimensional framework to be globally rigid in all dimensions at least $d$.
Similar to global rigidity, a graph is called \textit{$d$-universally rigid}, if all its generic $d$-dimensional embeddings are universally rigid.
Universal rigidity has been studied extensively, see for instance \cite{connelly2017universal,Connelly2020,gortler2014characterizing,oba2023characterizing}.
In this work, we focus on 1-universally rigid graphs. 
Related work on universal rigidity in one dimension can be found in \cite{chen_universal_2021,chen_universal_2022,Generic_Universal_Rigidity_in_R}.
The overarching question in this study is to determine what kind of (sparse) graphs contain sufficient amounts of certain structures making them 1-universally rigid.
We address this question considering multiple notions of sparsity, namely sparse random graphs, graphs with high girth and graphs with very few edges.\\

First, a natural question of interest is for which regimes of $p$ an Erd\H{o}s-R\'enyi random graph $G \sim \calG{n}{p}$ has a certain rigidity property.
This was first studied by Jackson, Servatius and Servatius \cite{First_Rigidity_of_random}, where among other results, the authors investigated the case of 2-global rigidity.
Recent results on global rigidity of random graphs in higher dimensions were obtained by \cite{Bootstrap_higher_dimension}, \cite{benjamini2024determiningpointsconfigurationline} and \cite{Hitting_time_higher_dimension}.
The one-dimensional case, which we will consider here, has also recently received interest.
Benjamini and Tzalik \cite{benjamini2024determiningpointsconfigurationline} proved that there exists some constant $C > 0$ such that for $p = C\log n/n$, a fixed 1-dimensional injective framework is globally rigid w.h.p.
This result was improved by Gir\~{a}o, Illingworth, Michel, Powierski and Scott \cite{Reconstrucing_in_R_sharp_constant}, who established a hitting-time result for reconstructing a fixed point configuration, implying in particular that $p=\log n/n$ is the sharp threshold.
Montgomery, Nenadov, Portier and Szab\'o \cite{Hitting_time_1-gloabl_rigidity_of_graph} proved the stronger result that, in fact, the hitting time at which every injective one-dimensional framework is globally rigid coincides w.h.p. with the hitting time for minimum degree two.
In this work we establish the following similar result concerning the 1-universal rigidity of random graphs:

\begin{theorem}\label{thm:Universal rigidity of random graph result}
There exists an absolute constant $C > 0$ such that $G \sim \calG{n}{p}$, for $p \geq C \log n \log \log n/n$, is 1-universally rigid w.h.p.
\end{theorem}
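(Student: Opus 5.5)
We prove the statement by a local-to-global argument: certify small 1-universally rigid pieces, then glue them. Two standard facts about 1-universal rigidity will be used.

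\emph{Gluing.} If $G_1$ and $G_2$ are 1-universally rigid and share at least two vertices, then $G_1\cup G_2$ is 1-universally rigid. In any equivalent framework in $\mathbb{R}^{d'}$, each $G_i$ is placed congruently to its generic collinear position. So $G_1$ lies on a line, and $G_2$ lies on a line. These two lines share two distinct points and therefore coincide. The two pieces are then related by an isometry of that line that fixes two points, which is the identity.

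\emph{Degree-two extensions and small certificates.} It is known that $K_3$ is \emph{not} 1-universally rigid for generic positions. It is also known that certain small graphs are 1-universally rigid generically; for example $K_4$, or more economically a "double triangle"/$K_4$ minus an edge-type configuration with suitable stress. I would verify directly, via a PSD equilibrium stress of rank $n-2$ (Connelly's sufficient condition, which is also necessary generically in dimension $1$ by Gortler--Thurston), that $K_4$ is 1-universally rigid. Then a vertex joined to two vertices of a 1-universally rigid graph does \emph{not} keep it rigid (it can fold out of the line). A vertex joined to three vertices does, by the gluing lemma applied with a $K_4$-like certificate, and by trilateration with three collinear points. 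More usefully: if $H$ is 1-universally rigid and $v$ has neighbours $a,b$ in $H$ such that some pair of $v$'s neighbourhood forms a rigid certificate, then $H+v$ is rigid.

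The plan for $G\sim\calG{n}{p}$ has three steps.

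Step 1 (core). Take a vertex set $S$ of size roughly $n/\log\log n$, or use the dense part. Show w.h.p. there is a 1-universally rigid subgraph $H_0$ spanning a linear-size set. Do this by finding many copies of $K_4$ chained through shared edges. Since $p\ge C\log n\log\log n/n$ only gives $\Theta((\log n)^6(\log\log n)^6/n^2)\cdot n^4$ copies of $K_4$, that is polylogarithmically many, $K_4$'s alone are too rare. So the certificate must be a sparse rigid gadget. I would instead use a gadget $F$ of bounded size, with edge density below $\log n$ scaling, which is 1-universally rigid: for instance a wheel, or the graph formed by two vertices $x,y$ joined by $k$ internally disjoint paths of length $2$ together with a spanning tree. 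Its universal rigidity would be certified by an explicit PSD stress whose existence for generic points I would prove by a counting/dimension argument.

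Step 2 (absorption). Show every vertex $v$ has, w.h.p., its neighbourhood intersecting the growing rigid component in a configuration that forces $v$ into the line. Here is where $\log\log n$ enters. I expect to need $v$ to reach the component through about $\log\log n$ paths of length two, giving a rigid gadget containing $v$ that shares $\ge2$ vertices with $H_0$. Bootstrap percolation-style rounds, with $O(\log\log n)$ rounds of doubling the rigid set and a union bound over vertices, should finish it.

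Step 3. Apply gluing repeatedly to conclude the union is all of $G$ and hence 1-universally rigid.

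The main obstacle is Step 1/2's gadget: identifying a sparse graph family that is 1-universally rigid for \emph{every} generic placement, and with enough copies attached to each vertex at this density. I would first try proving that a vertex joined by many length-2 paths to a rigid set, with the middle vertices also adjacent to each other, is forced collinear. I would handle the generic-position failure cases (folding) by showing that with $\Omega(\log\log n)$ independent options some one avoids the bad ordering of points on the line.
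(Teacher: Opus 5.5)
Your core-then-absorb skeleton matches the paper's, and your gluing lemma is correct. However, two facts you build on are false, and the core step cannot work as planned.

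First, $K_3$ \emph{is} universally rigid at every injective placement on the line. Three distinct collinear points satisfy a triangle inequality with equality, and this forces the same collinear configuration in every $\mathbb{R}^{d'}$. Hence a vertex with two neighbours in a universally rigid set cannot fold out of the line; this is exactly the paper's $K_4$-completion (degree-2 extension) lemma. Absorption is then immediate once a rigid set $C$ with $|C|\ge n/6$ exists. The property that every nonempty $A$ with $|A|\le 5n/6$ contains a vertex with two neighbours outside $A$ lets you add vertices to $C$ one at a time, so no $\log\log n$ paths of length two are needed.

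Second, no bounded-size gadget can certify the core. A graph on $v\ge 4$ vertices that is 1-universally rigid at every generic placement is 2-connected and not a cycle: a cycle of length at least $4$ has generic non-stretched placements, which are not universally rigid. So it has $e\ge v+1$ edges, and at $p=C\log n\log\log n/n$ its expected number of copies is $O\bigl(n^{v-e}(\log n\log\log n)^{e}\bigr)=o(1)$. Wheels and your $K_{2,k}$-type gadgets are therefore absent w.h.p. So is $K_4$: indeed $n^4p^6=o(1)$, not polylogarithmic. There are only polylogarithmically many triangles, and they miss almost all vertices.

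The missing idea is to use certificates of unbounded length that depend on the placement. These are \emph{stretched cycles}: cycles whose vertices are monotone along the line apart from one closing edge. They are universally rigid for that placement even though cycles are not generically so. The paper's construction runs as follows.
\begin{enumerate}
\item Fix an arbitrary injective $\Phi$, split the order into six intervals, and split $I_3$ into $K\approx 4\log n$ blocks of size $m\approx n/(6K)$.
\item Grow monotone paths from $u\in I_2$ whose reachable sets at least double from block to block, by expansion (P1) and local sparsity (P2), until they have size $s\approx n/\log n$.
\item Do the same from $u$ into $I_1$ and from $w\in I_5$ into $I_4$ and $I_6$. Property (P3) then supplies the two closing edges, so every pair in $U\times W$ lies on a stretched cycle.
\item $K_4$-completions make $U\cup W$ a rigid core of linear size.
\end{enumerate}
The $\log\log n$ arises because (P1) and (P3) must hold for all sets of size about $n/\log n$. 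The union bound over such sets costs about $\log\log n$ per vertex, which $pn/\log n$ has to beat.

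Finally, beware of quantifiers. $\Phi$ is arbitrary and may depend on $G$, so the conclusion must hold for all orderings of $V$ simultaneously. A probabilistic argument like ``among $\Omega(\log\log n)$ independent options one avoids the bad ordering'' cannot be union-bounded over $n!$ orderings. The paper instead proves ordering-free pseudorandom properties (P1)--(P4) w.h.p.\ and then argues deterministically for every injective placement.
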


We note that Criscitiello \cite{criscitiello2025threshold} asked to determine the threshold for an Erd\H{o}s-R\'enyi random graph to be $d$-universally rigid. Thus, our result gives a partial answer to this question when $d=1$.
Moreover the proof we provide is robust to some amount of edge deletions, and therefore we also obtain the following result via a standard probabilistic argument.

\begin{theorem}\label{thm:high girth result}
There exist 1-universally rigid graphs of arbitrarily large girth.
\end{theorem}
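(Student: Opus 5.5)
The plan is the standard Erdős deletion argument, driven by a robust version of the proof of Theorem~\ref{thm:Universal rigidity of random graph result}. Fix a girth target $g\ge 3$ and take $G\sim\calG{n}{p}$ with $p=C\log n\log\log n/n$. The expected number of cycles of length at most $g$ is
\[
\sum_{k=3}^{g}\frac{(n)_k}{2k}p^k\le \sum_{k=3}^{g}(np)^k=O\bigl((\log n\log\log n)^g\bigr)=n^{o(1)}.
\]
By Markov's inequality, w.h.p.\ there are at most $n^{o(1)}$ such short cycles. Deleting one edge from each yields a graph $G'$ of girth greater than $g$, obtained from $G$ by removing a set $F$ of only $n^{o(1)}$ edges. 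It therefore suffices to show that w.h.p.\ $G$ is robustly 1-universally rigid: $G-F$ remains 1-universally rigid for every edge set $F$ with $|F|\le n^{1/2}$, say.

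For the robust statement I would redo the proof of Theorem~\ref{thm:Universal rigidity of random graph result} while tracking which properties of $\calG{n}{p}$ it uses. Typically such a proof shows that a generic line realization admits a PSD equilibrium stress of rank $n-2$, or equivalently that the graph can be built from 1-universally rigid pieces glued along enough common vertices. The pieces would be small dense or "universally rigid cell" structures found around each vertex via expansion and many short paths between vertex pairs. Every such property should be of the form "there are at least $\Omega(\log n)$, or polylogarithmically many, edge-disjoint (or nearly disjoint) certificates." Then an adversary deleting $n^{1/2}$ edges can destroy these certificates only locally.

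The catch is that a single vertex has only about $\log n\log\log n$ incident edges, so $n^{1/2}$ deletions could isolate a few vertices. The fix is that deletions are not adversarial: each deleted edge lies on a short cycle, and w.h.p.\ no vertex lies on more than $O(1)$ cycles of length at most $g$. This holds because the expected number of pairs of short cycles sharing a vertex is $n^{-1+o(1)}$. Hence $F$ contains at most $O(1)$ edges at each vertex and sits in $n^{o(1)}$ disjoint local clusters. I would therefore prove the stronger robustness property only against sets $F$ of this form: bounded degree, small, and spread out. Under such deletions each vertex keeps almost all its neighbours, and every local structure used in the rigidity proof survives with room to spare.

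The main obstacle is verifying that the proof of Theorem~\ref{thm:Universal rigidity of random graph result} really is local, and identifying exactly which global ingredient (a spectral gap or connectivity of an auxiliary "gluing" graph) must survive the deletions. Connectivity-type and expansion-type ingredients are stable under deleting $O(1)$ edges per vertex, with $n^{o(1)}$ deletions in total, by a union bound over cuts. So the work is to check each lemma under this weakened hypothesis rather than invent new ideas. If some local gadget needs a short cycle (for example a triangle-based universally rigid piece), then high girth forbids it outright. In that case the gadget must be replaced by one built from long paths, and I expect this replacement to be the genuinely delicate point.
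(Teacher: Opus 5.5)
Your outer framework matches the paper's. You sample $\calG{n}{p}$ and delete one edge from each short cycle. You then note that w.h.p.\ no vertex lies on two distinct short cycles: their union has $a\le 2g-3$ vertices and at least $a+1$ edges, so the expected count is $O(p\,(np)^{2g})=o(1)$. Hence each vertex loses at most one incident edge. But the proposal stops where the real work begins, so it does not prove the theorem as written. You never show that the rigidity argument survives these deletions. You defer this to ``checking each lemma'' of a proof you only guess at (PSD stresses of rank $n-2$, rigid cells, a spectral gap). You also leave open that some gadget might need short cycles and have to be rebuilt from long paths.

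The missing ingredient is an explicit deterministic criterion for 1-universal rigidity, phrased through properties that $\calG{n}{p}$ satisfies with slack. In the paper this is \Cref{lem:deterministic}: properties (P1)--(P4) imply that every injective line framework is universally rigid. The argument uses three combinatorial steps:
\begin{itemize}
\item stretched cycles built from monotone paths, which universally link every pair in $U\times W$;
\item $K_4$-completions applied to universally linked pairs rather than to actual $K_4-e$ subgraphs;
\item absorption of each remaining vertex via two neighbours in the rigid clique.
\end{itemize}
\Cref{lem:random-properties} shows that $G$ satisfies the strengthened versions (Q1)--(Q4). These require $2q$ neighbours instead of $q$, $\Omega(s\log\log n)$ edges between disjoint $s$-sets instead of one, and three outside neighbours instead of two; (Q2) is an upper bound on edge counts and is preserved by deletion. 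Removing at most one edge per vertex then immediately yields (P1)--(P4) for the pruned graph.

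The girth plays no role at all, because \Cref{lem:deterministic} applies to any graph satisfying (P1)--(P4). The cycles it uses have length $\Theta(\log n)$, and the ``triangles'' in its $K_4$-completions consist partly of universally linked non-edges. So your worry about triangle-based gadgets does not arise, but you must supply such a robust criterion; without it, ``rigidity survives bounded-degree deletions'' is an unproved claim. Also, your first reduction (robustness against every $F$ with $|F|\le n^{1/2}$) is false as stated, since degrees are only $\Theta(\log n\log\log n)$. Only the bounded-per-vertex version is usable.
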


Dantas e Moura, Jord\'an and Silverman \cite{Generic_Universal_Rigidity_in_R} recently suggested to find such 1-universally rigid graphs of arbitrarily large girth.
We achieve partial progress towards this goal by proving their existence in a non-constructive manner.

We remark that similar to the prior work mentioned above, within the proof of \Cref{thm:Universal rigidity of random graph result} we in fact prove the stronger statement that any \emph{injective} 1-dimensional framework $(G,\Phi)$ is universally rigid.
The genericity given by the common definition of universal rigidity stated above is not needed.
The same holds for \Cref{thm:high girth result}.\\

Complementing these results involving random graphs, we also study the 1-universal rigidity of very sparse deterministic graphs.
In particular, we achieve progress in determining the minimal number of edges needed to be 1-universally rigid.
The best currently known constructions have $|E| = 2|V|-3$ edges and can be obtained e.g. by iterative degree-2 extensions (see \Cref{lem:Deg-2-Extension}).
Despite the simplicity of this construction and the large number of examples it provides, it was speculated that $2|V|-3$ is actually the lowest possible number of edges:
\begin{Question} [\cite{tibor_ur_frameworks}, Question 3.6] Let $G=(V,E)$ be universally rigid in $\mathbb{R}^1$. Does this imply that $ |E|\geq 2|V|-3 $? \end{Question} 
There are several reasons to regard $2|V|-3$ as the natural candidate for the optimal lower bound. First, by Laman's theorem \cite{laman1970graphs}, every graph that is rigid in $\mathbb{R}^2$ has at least $2|V|-3$ edges, and  Jord\'an and Nguyen also speculated in \cite{tibor_ur_frameworks} that every graph that is universally rigid in $\mathbb{R}^1$ must be rigid in $\mathbb{R}^2$. %\julien{add definition of rigid}. I added it in the next sentence (Ole)
Here, unlike global rigidity, rigidity in $\mathbb{R}^2$ only requires that the respective frameworks do not admit nontrivial continuous deformations along equivalent frameworks.
Moreover, the bound $2|V|-3$ is known to be tight under the stronger assumption that every quasi-injective realization of $G$ on the line is universally rigid \cite{Generic_Universal_Rigidity_in_R}, where \emph{quasi-injective} means that adjacent vertices have distinct positions on the line.
Despite these indications, the best previously known lower bound for the general (generic) case is due to Dantas e Moura, Jord\'an, and Silverman \cite{Generic_Universal_Rigidity_in_R}, who proved that every 1-universally rigid graph on at least six vertices satisfies $|E|\geq \frac{3}{2}|V|$. 
Their proof essentially uses degree-2 extensions (which we will introduce in \Cref{sec:Preliminaries}) to reduce any 1-universally rigid graph to one of minimum degree 3.
Our next main result breaks this degree 3 barrier.

\begin{theorem}
\label{thm:Main-Deterministic}
    There exists an absolute constant $\varepsilon >0$ such that for all 1-universally rigid graphs on at least 7 vertices, we have
    \[|E| \geq (3/2+\varepsilon)|V|.\]
    In particular, one may take $\varepsilon=1/26$.
\end{theorem}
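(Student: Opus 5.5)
We argue by contradiction, via a minimal counterexample combined with a discharging argument. Let $G=(V,E)$ be a 1-universally rigid graph on $n\ge 7$ vertices with $|E|<(3/2+\varepsilon)n$, chosen with $n$ minimal.

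\textbf{Step 1: reductions.} Following the approach of Dantas e Moura, Jord\'an and Silverman, we first use the degree-2 extension lemma (\Cref{lem:Deg-2-Extension}) in reverse. If a vertex $v$ has degree at most 2, then either $G$ is not 1-universally rigid, or deleting $v$ yields a smaller 1-universally rigid graph. The inequality $|E|\ge(3/2+\varepsilon)|V|$ is preserved under adding a degree-2 vertex only when $2\ge 3/2+\varepsilon$, which holds. So, modulo small base cases (which is why we need $n\ge 7$; graphs on at most six vertices must be checked by hand), we may assume $\delta(G)\ge 3$.

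\textbf{Step 2: local obstructions from degree-3 vertices.} Write $V_3$ for the set of degree-3 vertices. Since $2|E|=\sum_v \deg(v)$, the bound $|E|<(3/2+\varepsilon)n$ forces all but an $O(\varepsilon)$ fraction of vertices to lie in $V_3$, with few high-degree vertices. The key step is a structural lemma showing that certain dense configurations of degree-3 vertices are incompatible with universal rigidity. Concretely, I would aim to prove that two adjacent degree-3 vertices $u,v$, or a degree-3 vertex whose three neighbours all have degree 3, allow a \emph{flex}. The idea is to choose a generic placement on the line and construct an equivalent framework in $\mathbb{R}^2$ in which $u$ (or the edge $uv$) is rotated out of the line.

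For a single degree-3 vertex $u$ with neighbours $a<b<c$, the positions are rigid only if $u$ lies strictly inside $[a,c]$ relative to the distances. Generic positions can be chosen so that $u$ lies outside the convex hull of its neighbours, e.g.\ $u<a$. Then $u$ can be lifted while preserving its distances only if $a,b,c$ are collinear and $u$ is on one side. But lifting $u$ alone never works, because its three distances pin it down once the neighbours are fixed. So the real argument must be global: I would use the stress/Gale characterization. A generic framework on the line is universally rigid iff it admits a PSD equilibrium stress of rank $n-2$, which is known for $d=1$ generically. At a degree-3 vertex $u$ the stress row has support on 3 neighbours plus itself, and equilibrium in dimension 1 gives linear constraints. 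I would show that if too many degree-3 vertices are mutually adjacent, the stress matrix restricted to them forces a kernel dimension larger than 2, contradicting rank $n-2$.

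\textbf{Step 3: discharging.} Assign each vertex charge $\deg(v)-3-2\varepsilon$, so the total charge is $2|E|-(3+2\varepsilon)n<0$. Each vertex of degree $\ge 4$ then sends charge to its degree-3 neighbours. Using the Step 2 lemma, every degree-3 vertex has enough neighbours of degree $\ge 4$, or sits in a bounded configuration adjacent to such vertices. After redistribution every vertex has nonnegative charge, which contradicts the negative total. Balancing the rules is what determines the constant, and $1/26$ should come out of this bookkeeping.

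The main obstacle is Step 2: proving a robust local lemma that clusters of degree-3 vertices destroy 1-universal rigidity for some generic embedding. I would first try the rank-of-stress argument on the induced subgraph of $V_3$-vertices, bounding the rank contribution by counting equilibrium constraints.
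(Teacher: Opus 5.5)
\textbf{There is a genuine gap, and it is in Step 2: the local lemma you propose is false in both of its forms.} Take $K_4$ on $\{x,z,w,t\}$, add a vertex $c$ adjacent to $z,w$, add $v$ adjacent to $c,x$, and then replace the edge $vx$ by $K_4-e$ with new internal vertices $a,b$. The edge set is the $K_4$ together with $cz,cw,cv,va,vb,ab,ax,bx$.
\begin{itemize}
\item The graph has $8$ vertices, $14$ edges and minimum degree $3$.
\item It is $1$-universally rigid for every injective placement, by repeated $K_4$-completions: $c$ joins the clique via $z,w$; then $\{v,a,b,x\}$ links $v$ to $x$; then $v$, $a$, $b$ join.
\item Yet $a,b$ are adjacent degree-3 vertices, and $v$ is a degree-3 vertex whose neighbours $a,b,c$ all have degree $3$.
\end{itemize}
More generally, replacing any edge of a universally rigid graph of minimum degree $3$ by $K_4-e$ keeps it universally rigid (edge-replacement lemma). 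It also always creates two adjacent degree-3 vertices. So no argument of the form ``adjacent degree-3 vertices force the stress kernel to be too large'' can work. Your Step 3 rule assumes every degree-3 vertex has enough neighbours of degree $\ge 4$ or sits in a bounded configuration, so it has nothing to rest on, and $1/26$ is never actually derived. Your Step 1 reduction to $\delta(G)\ge 3$ is fine and matches the paper.

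\textbf{What is true, and what the paper proves, is weaker.} Let $S$ be the vertices of degree $\ge 4$ and $V_1=V\setminus(S\cup N(S))$. Then $G[V_1]$ is a forest: deep degree-3 vertices may be adjacent, even form large trees, but cannot close a cycle.
\begin{itemize}
\item Triangles are excluded using the non-rigid prism $B_3$ and the edge-replacement lemma. The same triangle lemmas also force $u_i\neq u_{i+1}$ for the outside neighbours $u_i$ of a shortest cycle.
\item Induced cycles of length $\ge 4$ are excluded via Gortler--Thurston. Take PSD rank-$(n-2)$ stresses at generic placements converging to a degenerate placement with $p(u_i)=p(v_i)-1$, and normalise the $C\times C$ block to trace $1$.
\item In the limit, the outer stresses sum to zero, so they vanish because the block is PSD. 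Then $a_j(x_{j+1}-x_j)=t\neq 0$, so at least two cycle stresses are negative. A $0/1$ test vector then contradicts $M\succeq 0$.
\end{itemize}
Note that the contradiction comes from positive semidefiniteness at a specially chosen limit placement, not from a rank count at a generic one.

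Given acyclicity, no discharging is needed, only a count. Put $m=2|E|-3|V|$.
\begin{itemize}
\item At least $|V_1|\ge |V|-5m$ edges leave $V_1$, since $|S|+|N(S)|\le 5m$.
\item All of these edges land in $N(S)$, and each vertex there absorbs at most $2$ of them, so there are at most $8m$.
\end{itemize}
Hence $|V|\le 13m$, which gives $\varepsilon=1/26$.
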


\subsection*{Paper organization.} In \Cref{sec:Preliminaries} we introduce some notation used throughout the paper, and state some rather simple and known results concerning the 1-universal rigidity of certain structures.
In \Cref{sec:Random part} we then focus on proving \Cref{thm:Universal rigidity of random graph result} and show that the proof can easily be adapted to also obtain \Cref{thm:high girth result}.
Following this, we provide a proof of \Cref{thm:Main-Deterministic} in \Cref{sec:Deterministic part}.
Lastly in \Cref{sec:Open problems}, we state some related open problems.

\subsection*{Acknowledgment.} This work was produced during the two semester projects of the first and second author as part of their M.Sc. and B.Sc. in Mathematics programs at EPFL, during spring semester 2025/26.
Both projects were supervised by the third author within the Chair of Extremal Combinatorics held by Prof. Oliver Janzer.
We would like to thank him for offering and organizing the projects in his group.

\subsection*{AI declaration.} All of the main ideas in this paper are due to the authors. The only use of AI concern \Cref{lem:random-properties} and \Cref{G1acyclic}. The authors first established \Cref{lem:random-properties} for a weaker bound on $p$ and then used ChatGPT 6 to optimise the associated computations, resulting in the bound stated in the present version.   
Concerning \Cref{G1acyclic}, the authors first produced a purely geometric proof, whose main idea was to start with an embedding of the whole graph in $\mathbb{R}$, and perturb one by one the vertices along the cycle into a new equivalent, non-congruent embedding in $\mathbb{R}^2$. 
Although this approach offers a natural geometric explanation for the lemma, closing the cycle while ensuring the initial embedding remains generic is technical, requiring a case analysis, and, in one case, a further perturbation into $\mathbb{R}^3$. The authors suspected there may be a simpler proof by using positive semidefinite equilibrium stresses, based on the characterization of universal rigidity by Gortler and Thurston \cite{gortler2014characterizing}. 
They asked ChatGPT 6 to provide such a proof, and the resulting proof is the one presented in this paper, with its exposition subsequently revised and improved by the authors.

\section{Preliminaries}\label{sec:Preliminaries}

\subsection*{Notation.}
For a graph \(G = (V,E)\), we write \(V(G)\) and $E(G)$ for its vertices and edges, as well as \(G[X]\) for the graph induced by a set $X$ of vertices.
By \(N(S)\) we denote the neighbourhood of a subset \(S \subset V(G)\), explicitly defined as \(N(S) = \left(\bigcup_{v \in S} N(v) \right) \setminus S\)  where \(N(v)\) are the vertices adjacent to \(v\). We use \(d(v)\) for the degree of \(v\), \(d_A(v)\) for the number of neighbours of \(v\) in \(A\), and \(e(A,B)\) for the number of edges between \(A\) and \(B\).

For the context of 1-universal rigidity, we say that two vertices \(v,w \in V(G)\) are \emph{universally \ linked} in a framework \((G, \Phi)\), if all equivalent frameworks \((G, \Psi)\) satisfy \(\lVert \Phi(v) - \Phi(w) \rVert = \lVert \Psi(v) - \Psi(w) \rVert\). Note that all the following lemmas assuming an edge between \(v,w\) also apply for \(v,w\) universally linked. Furthermore, a framework is 1-universally rigid if and only if all its vertices are universally linked.

\subsection*{Some results on 1-universal rigidity.}
The following relatively simple constructions and results concerning 1-universal rigidity will be used repeatedly throughout the paper.
We recall the following standard results, where we omit citations when the results are folklore.
\begin{lemma}\label{lem:K4 completion}
Let $G = K_4 \setminus \{uv\}$, that is the graph on 4 vertices missing exactly one edge $\{u,v\}$.
Then in any 1-dimensional injective framework $(G,\Phi)$, the vertices $u$ and $v$ are universally linked.
\end{lemma}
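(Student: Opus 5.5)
The plan is to use the fact that in a one-dimensional injective framework every triangle is degenerate. Degenerate triangles stay degenerate in any equivalent framework, in any dimension. Write the vertex set of $G = K_4\setminus\{uv\}$ as $\{u,v,a,b\}$, so that $E(G)=\{ua,ub,va,vb,ab\}$. Thus $\{u,a,b\}$ and $\{v,a,b\}$ each span a triangle in $G$. Fix an injective $(G,\Phi)$ and an equivalent $d'$-dimensional framework $(G,\Psi)$. The goal is to show that $\Psi(u)$ and $\Psi(v)$ are forced onto the line through $\Psi(a),\Psi(b)$, at positions determined by $\Phi$.

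\textbf{Step 1: the triangle $\{u,a,b\}$.} Since $\Phi$ is injective, $\Phi(u),\Phi(a),\Phi(b)$ are three distinct reals. One of them lies between the other two, so one of the three pairwise distances equals the sum of the other two. All three pairs are edges, so $\Psi$ has exactly the same three distances. By the equality case of the triangle inequality in $\mathbb{R}^{d'}$ (strict convexity of the Euclidean norm), the three points $\Psi(u),\Psi(a),\Psi(b)$ are collinear, with the same point in the middle as for $\Phi$. Also $\Psi(a)\neq\Psi(b)$, because $\lVert\Phi(a)-\Phi(b)\rVert>0$.

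\textbf{Step 2: coordinates on the line.} Let $\ell$ be the line through $\Psi(a)$ and $\Psi(b)$, and parametrise it isometrically by $\iota:\mathbb{R}\to\ell$ with $\iota(\Phi(a))=\Psi(a)$ and $\iota(\Phi(b))=\Psi(b)$. Such an $\iota$ exists because $\lVert\Psi(a)-\Psi(b)\rVert=|\Phi(a)-\Phi(b)|$. By Step 1, $\Psi(u)\in\ell$. A point of a line is uniquely determined by its distances to two distinct points of that line. Hence $\Psi(u)=\iota(\Phi(u))$. The identical argument applied to the triangle $\{v,a,b\}$ gives $\Psi(v)=\iota(\Phi(v))$.

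\textbf{Step 3: conclusion.} Since $\iota$ is an isometry, $\lVert\Psi(u)-\Psi(v)\rVert=|\Phi(u)-\Phi(v)|$. As $\Psi$ was an arbitrary equivalent framework, $u$ and $v$ are universally linked.

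The only point requiring care is the equality case of the triangle inequality in Step 1, namely that equality forces collinearity with the correct ordering. This is standard and follows from the strict convexity of the Euclidean norm. Everything else is bookkeeping. Injectivity of $\Phi$ is what guarantees that the triangles are non-trivially degenerate and that $a\neq b$ in $\Psi$.
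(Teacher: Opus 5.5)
Your proof is correct and complete: equality in the triangle inequality forces $\Psi(u)$ and $\Psi(v)$ onto the line through the distinct points $\Psi(a),\Psi(b)$, and the uniqueness of a point on a line given its distances to two distinct points of that line then fixes $\lVert\Psi(u)-\Psi(v)\rVert$. The paper gives no proof of this lemma, treating it as folklore, and your argument is the standard justification; the same device of an isometry onto the line through two realized points appears in the paper's proof of the edge-replacement lemma.
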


Note that these $K_4$-completions can also be applied to subframeworks, which is a useful tool in establishing 1-universal rigidity.
When used this way, a convenient way to think about them is in terms of degree-2 extensions.
We say that $G$ is a \emph{degree-2 extension} of $H$, if $G$ has exactly one additional vertex compared to $H$, which is adjacent to exactly two distinct vertices of \(H\).

\begin{lemma}
\label{lem:Deg-2-Extension}
Let $G$ and $H$ be two graphs, such that $G$ is obtained from $H$ by a degree-2 extension. Then $G$ is $1$-generically universally rigid if and only if $H$ is $1$-generically universally rigid.
\end{lemma}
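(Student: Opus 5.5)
Let $w$ be the new vertex of $G$, adjacent to $a,b\in V(H)$, so that $H=G-w$. I will prove the two directions separately, working throughout with generic realizations $\Phi\colon V(G)\to\mathbb{R}$; these are injective and satisfy no rational algebraic relations.

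\emph{($H$ universally rigid $\Rightarrow$ $G$ universally rigid.)} Let $(G,\Psi)$ be equivalent to $(G,\Phi)$ in some $\mathbb{R}^{d'}$. The restriction $\Phi|_{V(H)}$ is generic, so by hypothesis $(H,\Psi|_{V(H)})$ is congruent to $(H,\Phi|_{V(H)})$. Hence $\Psi(V(H))$ lies on a line $\ell$, and $\Psi(a)\neq\Psi(b)$ by injectivity. The point $\Psi(w)$ has prescribed distances $|\Phi(w)-\Phi(a)|$ and $|\Phi(w)-\Phi(b)|$ to $\Psi(a)$ and $\Psi(b)$. In $\mathbb{R}^{d'}$ the set of such points is a sphere of dimension $d'-2$ centred on $\ell$. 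I claim this sphere degenerates to a single point, because the three points $\Phi(a),\Phi(b),\Phi(w)$ are collinear: with $x=|\Phi(w)-\Phi(a)|$, $y=|\Phi(w)-\Phi(b)|$ and $z=|\Phi(a)-\Phi(b)|$, one of the triangle inequalities holds with equality. Equality in the triangle inequality forces $\Psi(w)$ to lie on $\ell$ at the unique position determined by $x,y,z$. That position is exactly the image of $\Phi(w)$ under the isometry carrying $\Phi|_{V(H)}$ to $\Psi|_{V(H)}$. So the whole framework $(G,\Psi)$ is congruent to $(G,\Phi)$. This is essentially the argument behind \Cref{lem:K4 completion}.

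\emph{($G$ universally rigid $\Rightarrow$ $H$ universally rigid.)} This is the direction I expect to be the main obstacle. I argue by contrapositive. Suppose $H$ is not generically 1-universally rigid. Universal rigidity of generic 1-dimensional frameworks is a generic property, so it fails at every generic $\Phi_H$; it then suffices to produce one generic realization of $G$ that is not universally rigid. Take $\Phi_H$ generic together with an equivalent, non-congruent $\Psi_H$ in $\mathbb{R}^{d'}$. The task is to place $w$ in both frameworks consistently.
\begin{itemize}
\item If $|\Psi_H(a)-\Psi_H(b)|=|\Phi_H(a)-\Phi_H(b)|$, choose $\Phi(w)$ generic. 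The triangle $\Phi(a),\Phi(b),\Phi(w)$ is degenerate, and a congruent degenerate triangle exists on the segment line through $\Psi_H(a),\Psi_H(b)$; placing $\Psi(w)$ there gives an equivalent framework that is still non-congruent.
\item If the distance between $a$ and $b$ changes, then collinearity may become impossible. For example, if $\Phi(w)$ lies between $\Phi(a)$ and $\Phi(b)$ and $|\Psi(a)-\Psi(b)|$ increases, no valid $\Psi(w)$ exists. To handle this, I would place $\Phi(w)$ outside the segment, say beyond $b$, so that $x=y+z$. If the new distance satisfies $z'\le z$, the point $\Psi(w)$ needs distances $x,y$ to points at distance $z'$. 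This is possible in $\mathbb{R}^{d'}$ with $d'\ge2$ exactly when $|x-y|\le z'\le x+y$, i.e.\ $z\le z'$, which contradicts $z'<z$. So the choice of side must depend on the sign of $z'-z$.
\end{itemize}
The fix for the second case is to use the freedom of where $w$ sits. If $z'>z$, put $\Phi(w)$ between $a$ and $b$: we need $|x-y|\le z'\le x+y=z$, which fails. If $z'<z$, put $w$ outside the segment: we need $z=x-y\le z'$, which also fails. Both naive placements therefore break. The remedy is to first average $\Psi_H$ with $\Phi_H$ along the straight-line family $\Psi_t=((1-t)\Phi_H,t\Psi_H)$, appropriately scaled, to keep the $ab$ distance fixed. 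I expect this to be the delicate step. If a direct construction does not work, I would instead cite the original source (Dantas e Moura--Jord\'an--Silverman), where this lemma is established via stresses: a PSD equilibrium stress of maximal rank on $G$ restricts, after eliminating $w$ (which carries zero stress on a degree-2 vertex unless $w$ is collinear), to one on $H$ plus an $ab$ term, and the Gortler--Thurston characterization then transfers rigidity back to $H$.
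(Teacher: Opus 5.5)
Your ``if'' direction is correct and is the paper's argument: once $\Psi|_{V(H)}$ is congruent to $\Phi|_{V(H)}$, the degenerate triangle $a,b,w$ pins $\Psi(w)$ to the line, which is exactly the $K_4$-completion. For the converse, the paper gives no proof and only cites Theorem~8 of Dantas e Moura--Jord\'an--Silverman, so your fallback citation agrees with it. However, everything you offer beyond that citation fails, so as a self-contained argument the hard direction has a genuine gap. Concretely:

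(i) Universal rigidity is \emph{not} a generic property on the line. A $4$-cycle is universally rigid in a stretched order but flexes in the plane in other generic orders, which is why the definition quantifies over all generic embeddings. You do not need this claim anyway: keep the given non-rigid generic $\Phi_H$ and choose $\Phi(w)$ transcendental over $\mathbb{Q}(\Phi_H)$ in any prescribed open interval.

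(ii) You correctly note that the side on which $w$ sits must depend on the sign of $z'-z$. But your ``fix'' paragraph then re-tests exactly the two failing pairings (between when $z'>z$, outside when $z'<z$) and concludes that both placements break. The complementary pairings are never checked, and they are the ones that work.

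(iii) No edge-preserving interpolation keeps the $ab$-distance fixed. The correct family $\Psi^t=(\sqrt{1-t}\,\Phi_H,\sqrt{t}\,\Psi_H)$ has squared $ab$-distance $(1-t)z^2+tz'^2$, and rescaling changes edge lengths.

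(iv) In the stress sketch, $w$ is always collinear with $a,b$ in a line realization. Moreover, its row cannot vanish in a rank-$(n-2)$ PSD stress, since otherwise $e_w$ would lie in the kernel $\mathrm{span}(\one,p)$. Eliminating $w$ by a Schur complement gives a maximal-rank PSD stress supported on $E(H)\cup\{ab\}$. That certifies $H+ab$, not $H$.

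The missing idea is to use both placements of $w$ to show that $a,b$ are universally linked in $H$. Let $\Phi_H$ be generic and $\Psi_H$ equivalent, and suppose $z'\neq z$. Pass to $\Psi^t$ with $t>0$ small. It is still equivalent to $\Phi_H$, $z_t-z$ has the sign of $z'-z$, and $z_t>0$.

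If $z_t<z$, put $\Phi(w)$ generic between $\Phi(a)$ and $\Phi(b)$ within $z_t/2$ of the midpoint, so $|x-y|\le z_t<x+y$. If $z_t>z$, put $\Phi(w)$ generic beyond $b$ with $y\ge z_t$, so $|x-y|=z<z_t\le x+y$. In either case a point at distances $x,y$ from $\Psi^t(a),\Psi^t(b)$ exists, since the ambient dimension is at least $2$. Universal rigidity of the generic framework $(G,\Phi)$ then forces $z_t=z$, a contradiction.

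Hence $z'=z$. Your first bullet now extends $\Psi_H$ collinearly at $w$, and rigidity of $G$ gives that $\Psi_H$ is congruent to $\Phi_H$. The same two-sided idea also repairs the stress route: the fill-in stress on $ab$ is positive when $w$ lies inside $[a,b]$ and negative when it lies outside. These two signs bound $|\Psi(a)-\Psi(b)|$ from below and from above respectively.
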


The 'if'-direction of this statement follows immediately from $K_4$-completions.
The reverse direction is harder to prove and can be found in Theorem 8 of \cite{Generic_Universal_Rigidity_in_R}.\\
Next, we introduce a 1-universally rigid framework that we will use in this paper.

\begin{proposition}
Let $G = v_1 v_2\dots v_n v_1$ be a cycle.
A 1-dimensional framework $(G, \Phi)$ satisfying $\Phi(v_1) < \Phi(v_2) < \dots < \Phi(v_n)$ is 1-universally rigid.
\end{proposition}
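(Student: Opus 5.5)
Let $x_i = \Phi(v_i)$, so $x_1 < x_2 < \dots < x_n$. Suppose $(G,\Psi)$ is an equivalent framework in some $\mathbb{R}^{d'}$. The plan is a direct triangle-inequality argument. The edge $v_nv_1$ is the "long" edge: its length $x_n - x_1$ equals the sum of the lengths of the path edges $v_iv_{i+1}$ for $i=1,\dots,n-1$, since these are consecutive in the line order.

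First, I compare the long edge with the path in $\Psi$. The triangle inequality along the path $v_1v_2\cdots v_n$ gives
\[
\lVert \Psi(v_n)-\Psi(v_1)\rVert \le \sum_{i=1}^{n-1}\lVert \Psi(v_{i+1})-\Psi(v_i)\rVert = \sum_{i=1}^{n-1}(x_{i+1}-x_i) = x_n-x_1 .
\]
Since $v_nv_1$ is an edge, the left-hand side also equals $x_n-x_1$, so this is an equality.

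Next, I use the equality case of the triangle inequality in Euclidean space. Equality in a chain of triangle inequalities forces all increment vectors $\Psi(v_{i+1})-\Psi(v_i)$ to be nonnegative multiples of a common unit vector $u$. Here every increment is nonzero, since $x_{i+1}-x_i>0$. Hence $\Psi(v_i)=\Psi(v_1)+(x_i-x_1)u$ for all $i$, so $\Psi$ is the image of $\Phi$ under an isometric embedding of the line into $\mathbb{R}^{d'}$. It follows that $\lVert\Psi(v_i)-\Psi(v_j)\rVert = |x_i-x_j|$ for all $i,j$, which is congruence.

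The only step needing care is the equality case. Strict convexity of the Euclidean norm gives it: by induction, $\lVert a+b\rVert=\lVert a\rVert+\lVert b\rVert$ with $a,b\neq 0$ forces $a$ and $b$ to be positively parallel. Applying this to partial sums shows all increments point in the same direction. I expect no real obstacle. The argument also shows that any path whose endpoints are joined by an edge, with vertices monotone along the line, is universally rigid; this is the form I would state for later use.
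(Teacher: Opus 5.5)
Your proof is correct and complete. The edge $v_nv_1$ forces the triangle inequality along the path $v_1v_2\cdots v_n$ to be tight. Strict convexity of the Euclidean norm then makes every (nonzero) increment $\Psi(v_{i+1})-\Psi(v_i)$ a positive multiple of one unit vector $u$. Finally, $\Psi(v_i)=\Psi(v_1)+(x_i-x_1)u$ gives congruence in every dimension $d'$.

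The paper does not prove this proposition; it lists it among the folklore facts in the preliminaries. Your equality-case argument is the standard folklore proof.

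An alternative, closer to the stress-matrix method the paper uses for \Cref{G1acyclic}, goes as follows. Put weight $1/(x_{i+1}-x_i)$ on each path edge and $-1/(x_n-x_1)$ on $v_nv_1$. This is an equilibrium stress, and it is positive semidefinite of rank $n-2$ by Cauchy--Schwarz. One then invokes Connelly's sufficient condition, whose conic condition is automatic on the line. Your route is more elementary and needs no external theorem.
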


We will refer to such frameworks as \emph{stretched cycles}.
Lastly, we also need the following classical Chernoff bound (see for instance Theorem 2.1 in \cite{janson2000random}).

\begin{lemma}
\label{lem:Chernoff}
Let \(X \sim \operatorname{Bin}(n,p)\) and let \(\lambda = np\). Then, for \(t>0\),
\[
\begin{aligned}
\mathbb{P}(X > \lambda + t)
&\leq
\exp\left(
-\frac{t^2}{2(\lambda+t/3)}
\right),
\qquad
\mathbb{P}(X < \lambda - t)
&\leq
\exp\left(
-\frac{t^2}{2\lambda}
\right).
\end{aligned}
\]
\end{lemma}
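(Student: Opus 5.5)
The last statement is the Chernoff bound (Lemma~\ref{lem:Chernoff}), a classical result. We would derive it by the exponential moment method and then optimise over the free parameter.

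\emph{Upper tail.} For any $s>0$, Markov's inequality applied to $e^{sX}$ gives
\[
\mathbb{P}(X>\lambda+t)\le e^{-s(\lambda+t)}\,\mathbb{E}e^{sX}.
\]
Since $X$ is a sum of $n$ independent Bernoulli$(p)$ variables,
\[
\mathbb{E}e^{sX}=(1+p(e^s-1))^n\le \exp\bigl(\lambda(e^s-1)\bigr),
\]
using $1+x\le e^x$. Choosing $s=\log(1+t/\lambda)$ yields
\[
\mathbb{P}(X>\lambda+t)\le \exp\bigl(-\lambda\,\varphi(t/\lambda)\bigr),\qquad \varphi(x)=(1+x)\log(1+x)-x .
\]
It then remains to prove the elementary inequality
\[
\varphi(x)\ge \frac{x^2}{2(1+x/3)}\qquad (x\ge 0).
\]
Both sides vanish at $0$ together with their first derivatives. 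So one approach is to compare second derivatives: $\varphi''(x)=1/(1+x)$, and this should dominate the second derivative of the right-hand side. An alternative is to check that the difference has nonnegative derivative, reducing to $\log(1+x)\ge \frac{x(6+x)}{2(3+x)^2}\cdot\ldots$ (a routine rational comparison). Substituting $x=t/\lambda$ gives exactly $t^2/(2(\lambda+t/3))$.

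\emph{Lower tail.} The argument is symmetric. Apply Markov to $e^{-sX}$, which gives the bound $\exp\bigl(-\lambda\varphi(-x)\bigr)$ with $x=t/\lambda\in(0,1]$. For $t>\lambda$ the probability is zero, so this range suffices. It then remains to show
\[
\varphi(-x)\ge x^2/2,
\]
which follows because $\varphi''(-x)=1/(1-x)\ge 1$.

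The only mildly delicate step is the calculus inequality $\varphi(x)\ge x^2/(2(1+x/3))$. We would handle it by setting $g(x)=\varphi(x)-x^2/(2(1+x/3))$, noting that $g(0)=g'(0)=0$, and showing $g''\ge 0$ via clearing denominators into a polynomial inequality. Everything else is standard, and in the paper we would simply cite Theorem~2.1 of \cite{janson2000random}.
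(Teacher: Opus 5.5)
Your proposal is correct and follows the standard exponential-moment proof of the result that the paper simply cites (Theorem~2.1 of \cite{janson2000random}): bound the tails by $\exp(-\lambda\varphi(\pm t/\lambda))$, then use $\varphi(x)\ge x^2/(2(1+x/3))$ for $x\ge 0$ and $\varphi(-x)\ge x^2/2$ for $0\le x\le 1$. The second-derivative comparison you left unchecked does go through, since $x^2/(2(1+x/3))$ has second derivative $27/(3+x)^3$ and $1/(1+x)\ge 27/(3+x)^3$ is equivalent to $x^2(x+9)\ge 0$; the reduction in your alternative route should read $\log(1+x)\ge 3x(6+x)/(2(3+x)^2)$.
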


\section{Universal rigidity of random graphs in $\mathbb{R}$}\label{sec:Random part}

In this section we prove \Cref{thm:Universal rigidity of random graph result} and \Cref{thm:high girth result}, whose proofs are split up into two lemmas.
We start by establishing a sufficient condition for 1-universal rigidity in \Cref{lem:random graph rigid}.
Following that, we prove \Cref{lem:random-properties} stating that the random graph $G \sim G(n,p)$ satisfies a slightly stronger version of this condition w.h.p.
This immediately shows \Cref{thm:Universal rigidity of random graph result}. 
For \Cref{thm:high girth result} however, we need to delete edges from all short cycles before applying \Cref{lem:random graph rigid}.
Here, a suitable bound on the number of edge deletions will allow us to show that the condition of \Cref{lem:random-properties} still implies the weaker one in \Cref{lem:random graph rigid}.

\begin{lemma}\label{lem:random graph rigid}
\label{lem:deterministic}
Let $G$ be an $n$-vertex graph.  Suppose that there are integers $s,K,m$  and real $q$, such that
$Km\le n/6$, $s \leq m \leq n/15$, and $1 \leq s \leq q2^{K-1}$. 
Suppose that $G$ has the following
properties.
\begin{enumerate}
\item[(P1)] For every set $A$ with $m/2\le |A|\le m$, fewer than $m/4$ vertices of $V(G)\setminus A$ have
fewer than $q$ neighbours in $A$.
\item[(P2)] Every non-empty set $A$ with $|A|\le 2s$ spans fewer than $q|A|/3$ edges.
\item[(P3)] Every two disjoint sets of size $s$ have at least one edge between them.
\item[(P4)] Every nonempty set $A$ with $|A|\le 5n/6$ contains a vertex with at least two neighbours in
$V(G)\setminus A$.
\end{enumerate}
Then every injective one-dimensional framework of $G$ is universally rigid.
\end{lemma}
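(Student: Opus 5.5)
We fix an injective framework $(G,\Phi)$ and order the vertices as $v_1,\dots,v_n$ with $\Phi(v_1)<\dots<\Phi(v_n)$. We then grow a \emph{rigid core}: a set $R$ of vertices that are pairwise universally linked. Whenever some vertex outside $R$ has two neighbours in $R$, the $K_4$-completion of \Cref{lem:K4 completion} (equivalently a degree-2 extension, \Cref{lem:Deg-2-Extension}) lets us add it to $R$. Property (P4) says that once $|V\setminus R|\le 5n/6$, some vertex of $V\setminus R$ has at least two neighbours in $R$. So it suffices to build a rigid core of size at least $n/6$; after that, iterating (P4) with $A=V\setminus R$ absorbs every remaining vertex.

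To build the initial core, we use stretched cycles (the Proposition above), together with $K_4$-completions. The natural plan is to split the line into consecutive blocks. Let $L_1,\dots,L_{2K}$ be the leftmost $2Km\le n/3$ vertices, grouped into blocks of $m$ consecutive vertices in the order, and let $M$ be the block of vertices lying to their right (or in the middle). A stretched cycle of the form $a<b<c<a'$ with $a,b$ in earlier blocks links far-apart vertices, and once a few vertices are linked, further vertices attach. Concretely, we would show by induction on $k\le K$ that there is a set $R_k$ of pairwise linked vertices whose size roughly doubles at each step ($|R_k|\gtrsim 2^{k-1}$, capped at $m$). This is where the condition $s\le q2^{K-1}$ enters: after $K$ steps the core has reached size $\ge s/q$ multiplied appropriately.

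The growth step would go as follows. Given a linked set $A$ with $m/2\le |A|\le m$, property (P1) ensures that all but fewer than $m/4$ outside vertices have at least $q$ neighbours in $A$. Any vertex with two neighbours in $A$ joins the core immediately, so the core keeps growing until it exceeds $m$. For small cores we would combine (P2) and (P3) instead. By (P3), two disjoint $s$-sets lying in different regions of the line always have an edge between them, which produces many ``long'' edges crossing between blocks. By (P2), a set of size $\le 2s$ is sparse. Hence, if few outside vertices had two neighbours in the core, the edges from the core and its neighbourhood would be forced into a set that is too dense, a contradiction. In this way the core of size $t$ yields about $t$ new attachable vertices, and after $K$ doublings it reaches size $\ge s$. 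Once the core has size $\ge s$, (P3) gives an edge from it to every $s$-set, and (P1) carries the core from size $m/2$ up to $m$ and beyond. Repeating the (P1) step (each block of size $m$, with $Km\le n/6$ room) pushes the core past $n/6$, at which point (P4) finishes the argument.

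The main obstacle is the seeding and early doubling: obtaining the first few linked pairs and the genuine doubling when $|R|<s$. Here the one-dimensional geometry must be used, not only $K_4$-completions. My first attempt would be to use (P3) on the left-most and right-most $s$ vertices, together with middle blocks, to find short stretched cycles (for example 4-cycles $a<b<c<d$ with edges $ab,bc,cd,da$). These cycles link their extreme vertices, and (P2) rules out the degenerate configurations.
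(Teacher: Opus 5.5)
Your final absorption step is correct and is how the paper finishes: once you have a pairwise universally linked set $R$ with $|R|\ge n/6$, (P4) applied to $A=V\setminus R$ together with $K_4$-completion absorbs every remaining vertex. (P1) would also do this from any linked set of size $m/2$.

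\textbf{The gap.} You never produce such a set. Your doubling step for a small core $R$ does not follow from the hypotheses. (P1) needs $|A|\ge m/2$, and (P4) needs $|V\setminus R|\le 5n/6$. (P2) is a sparsity condition: it makes neighbourhoods of small sets \emph{large}, but it does not force any outside vertex to have two neighbours in $R$. A small core each of whose neighbours sees it only once is fully consistent with (P1)--(P4). In fact, take $G(n,p)$ with $p=C\log n\log\log n/n$, the graph the lemma is applied to. There, two fixed vertices typically have no common neighbour (the expected number is $np^2\to0$). A set of size $t$ has only about $t^2np^2/2$ outside vertices with two neighbours in it, which is much smaller than $t$ for all $t\ll n/(\log n\log\log n)^2$. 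So a small seed cannot grow by $K_4$-completions at all. Your fallback of seeding with short stretched cycles also fails. (P3) yields a single edge between two $s$-sets, which cannot produce an ordered path $a<b<c<d$ with edges $ab,bc,cd$. Moreover, such random graphs contain only polylogarithmically many $4$-cycles in expectation.

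\textbf{The missing idea.} The doubling should be applied to the set of endpoints of \emph{monotone paths}, not to a rigid set, and pairs should be linked directly by long stretched cycles.
\begin{enumerate}
\item The paper splits the order into six intervals $I_1,\dots,I_6$ of size $n/6$ and cuts $I_3$ into blocks $J_1,\dots,J_K$ of size $m$.
\item Using (P1) backwards, it obtains $J_k'\subseteq J_k$ with $|J_k'|\ge 3m/4$ such that each vertex of $J_k'$ has at least $q$ neighbours in $J_{k+1}'$. It also obtains $U_+\subseteq I_2$ of size at least $n/6-m/4$ whose vertices have $q$ neighbours in $J_1'$.
\item For $u\in U_+$, let $N_k\subseteq J_k'$ be the set of endpoints of increasing paths from $u$. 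Then $|N_{k+1}|\ge\min\{2|N_k|,s\}$. Otherwise, a set $X\subseteq N_k$ of size $\min\{|N_k|,\lceil s/2\rceil\}$ together with $N_{k+1}$ would span at least $q|X|$ edges on fewer than $3|X|$ (and at most $2s$) vertices, contradicting (P2).
\item Since $|N_1|\ge q$ and $s\le q2^{K-1}$, this gives $|N_K|\ge s$. This, not a core-size count, is where the condition $s\le q2^{K-1}$ enters.
\item Repeating the construction from $I_2$ into $I_1$, and from $I_5$ into $I_4$ and $I_6$, yields $U\subseteq I_2$ and $W\subseteq I_5$ of size at least $n/6-m/2\ge 2s$.
\item For each $u\in U$ and $w\in W$, two applications of (P3) close a stretched cycle through $u$ and $w$: one between the $s$-sets reached in $I_3$ and $I_4$, and one between those reached in $I_1$ and $I_6$.
\end{enumerate}
So all pairs in $U\times W$ are linked at once. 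An edge inside $U$ and an edge inside $W$, both from (P3), plus $K_4$-completion then make $U\cup W$ a linked set of size at least $n/6$. Only at that point does your absorption argument apply.
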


We start by explaining the proof idea for \Cref{lem:random graph rigid}.
The main strategy is to employ a bootstrap approach when proving universal rigidity of a framework, which is quite common in the field of rigidity of random graphs (see for instance \cite{Hitting_time_higher_dimension}).
More precisely, we will start by finding a universally rigid subframework of linear size, and then extend it to the remaining vertices one by one using $K_4$-completions.
The initial subframework will consist of conveniently overlapping stretched cycles.
To construct it we use some ideas from Bollob\'as, Brightwell, and Ne\v{s}et\v{r}il \cite{bollobas_many_deletions_for_cover}, who investigated the number of required edge deletions to obtain a graph admitting an embedding without any stretched cycle.
Their approach is to lower bound the number of stretched cycles (of certain length) in any embedding, upper bound the number of such cycles a fixed edge can lie in and divide these two quantities for the final bound.
While we can reuse some of the ideas within the stretched cycle construction leading to the lower bound, reducing to a count no longer works in our setting.
Instead, we need more insight on how the stretched cycles overlap, to derive universal rigidity of a linear sized component.

\begin{proof}[Proof of \Cref{lem:random graph rigid}]
Fix an injective mapping $\Phi:V(G)\to\mathbb R$, and split the vertices, according to their order under
$\Phi$, into six consecutive intervals $I_1,\dots,I_6$ of size $n/6$, ignoring harmless rounding errors.
Partition $I_3$ into $K$ consecutive blocks $J_1,\dots,J_K$ of size $m$.
This is possible as $Km \leq n/6$, and unused vertices are discarded.
The goal is to construct stretched cycles of the form displayed in \Cref{fig:Sketch strechted cycles main proof}, where the vertices in $I_3$ along the green path belong to consecutive blocks $J_k$.

\begin{figure}
\begin{center}
\begin{tikzpicture}[scale=2.3,line cap=round,line join=round]

% Main horizontal line
\draw[thick] (-3,0) -- (3,0);

% End ticks
\draw[thick] (-3,0.15) -- (-3,-0.15);
\draw[thick] (-2,0.15) -- (-2,-0.15);
\draw[thick] (-1,0.15) -- (-1,-0.15);
\draw[thick] (0,0.15) -- (0,-0.15);
\draw[thick] (1,0.15) -- (1,-0.15);
\draw[thick] (2,0.15) -- (2,-0.15);
\draw[thick] (3,0.15) -- (3,-0.15);

% Points in line
\foreach \x in {-2.8,-2.5,-2.2,-1.5,-0.8,-0.5,-0.2,2.8,2.5,2.2,1.5,0.8,0.5,0.2} {
    \fill (\x,0) circle (1.2pt);
}

% Curves

% The small ones inside I_j
\draw[thick]
(-2.8,0)
.. controls (-2.7,0.3) and (-2.6,0.3) ..
(-2.5,0);

\draw[thick]
(-2.5,0)
.. controls (-2.4,0.3) and (-2.3,0.3) ..
(-2.2,0);

\draw[thick, color=green]
(-0.8,0)
.. controls (-0.7,0.3) and (-0.6,0.3) ..
(-0.5,0);

\draw[thick, color=green]
(-0.5,0)
.. controls (-0.4,0.3) and (-0.3,0.3) ..
(-0.2,0);

\draw[thick]
(2.8,0)
.. controls (2.7,0.3) and (2.6,0.3) ..
(2.5,0);

\draw[thick]
(2.5,0)
.. controls (2.4,0.3) and (2.3,0.3) ..
(2.2,0);

\draw[thick]
(0.8,0)
.. controls (0.7,0.3) and (0.6,0.3) ..
(0.5,0);

\draw[thick]
(0.5,0)
.. controls (0.4,0.3) and (0.3,0.3) ..
(0.2,0);

% The medium ones to I2 & I5

\draw[thick]
(-2.2,0)
.. controls (-2,0.5) and (-1.8,0.5) ..
(-1.5,0);

\draw[thick, color=green]
(-0.8,0)
.. controls (-1,0.5) and (-1.2,0.5) ..
(-1.5,0);

\draw[thick]
(2.2,0)
.. controls (2,0.5) and (1.8,0.5) ..
(1.5,0);

\draw[thick]
(0.8,0)
.. controls (1,0.5) and (1.2,0.5) ..
(1.5,0);

% The close-stretched-cycle ones

\draw[thick, color=orange]
(-0.2,0)
.. controls (-0.1,-0.4) and (0.1,-0.4) ..
(0.2,0);

\draw[thick, color=orange]
(-2.8,0)
.. controls (-1,-1) and (1,-1) ..
(2.8,0);

% Labels
\node at (-2.5,0.7) {$I_1$};
\node at (-1.5,0.7) {$I_2$};
\node at (-0.5,0.7) {$I_3$};
\node at (0.5,0.7) {$I_4$};
\node at (1.5,0.7) {$I_5$};
\node at (2.5,0.7) {$I_6$};

\node at (-1.5,-0.2) {$u$};
\node at (1.5,-0.2) {$w$};

\end{tikzpicture}
\caption{Sketch of stretched cycles we are looking for}
\label{fig:Sketch strechted cycles main proof}
\end{center}
\end{figure}

We now construct subsets $J_k'\subseteq J_k$ backwards, starting with $J_K'=J_K$.  
Having constructed $J_{k+1}'$ with
$|J_{k+1}'|\ge 3m/4$, property (P1) shows that all but fewer than $m/4$ vertices of $J_k$ have at least $q$
neighbours in $J_{k+1}'$.  Let $J_k'$ be these good vertices.  Thus $|J_k'|\ge 3m/4$, and every vertex of
$J_k'$ has at least $q$ neighbours in $J_{k+1}'$.  
Another application of (P1) gives a set
$U_+\subseteq I_2$, of size at least $n/6-m/4$, each of whose vertices has at least $q$ neighbours in $J_1'$.

Fix $u\in U_+$.  Let $N_k\subseteq J_k'$ be the set of vertices reachable from $u$ by a monotone path using one
vertex from each of $J_1',\dots,J_k'$.  We claim that
\begin{equation}\label{eq:growth}
 |N_{k+1}|\ge \min\{2|N_k|,s\}.
\end{equation}
Suppose for the sake of contradiction that \eqref{eq:growth} failed and choose $X\subseteq N_k$ with
\[
 |X|=\min\{|N_k|,\lceil s/2\rceil\}.
\]
Then $|X\cup N_{k+1}|\le 2s$.
We also clearly have $|N_{k+1}| \leq 2|X|$.
Hence (P2) gives
\[
 e\bigl(G[X\cup N_{k+1}]\bigr)<\frac{q}{3}(|X|+|N_{k+1}|)\leq3\frac{q}{3}|X| = q|X|.
\]
On the other hand, since every vertex of $X$ has
at least $q$ neighbours in $J_{k+1}'$ which by definition all lie in $N_{k+1}$, we have
\[
 e\bigl(G[X\cup N_{k+1}]\bigr)\ge q|X|,
\]
a contradiction.  Thus \eqref{eq:growth} holds.  Since $|N_1|\ge q$ and $q2^{K-1}\ge s$, every
$u\in U_+$ reaches at least $s$ vertices of $J_K \subseteq I_3$ by monotone paths.
Let $I_3^{u}$ be the set of these vertices.\\

The black parts of the stretched cycle shown in \Cref{fig:Sketch strechted cycles main proof} are constructed in the exact same way as we just constructed the green part.
We first repeat the construction from $I_2$ in the reverse direction through $I_1$.
This yields $U_-\subseteq I_2$ with
$|U_-|\ge n/6-m/4$, such that every $u \in U_-$  reaches a set of vertices $I_1^{u} \subseteq I_1$ of size at least $s$ by monotone decreasing paths. 
Setting $U := U_+ \cap U_-$, we get
\[
 |U|\ge n/6-m/2\ge 2s,
\]
where the last inequality follows from $s,m \leq n/15$.
The analogous construction around $I_5$, using $I_4$ and $I_6$, gives a set $W\subseteq I_5$ also of size at least $n/6-m/2\ge 2s$, with the following property: For any $w \in W$, the sets of vertices $I_4^{w} \subseteq I_4$ and $I_6^{w} \subseteq I_6$ reachable from $w$ via monotone paths, have size at least $s$.\\

Now fix $u\in U$ and $w\in W$. 
We are going to find the missing orange edges in \Cref{fig:Sketch strechted cycles main proof} to obtain a stretched cycle containing $u$ and $w$.
Since $I_3^{u}$ and $I_4^{w}$ both have size at least $s$, (P3) ensures that there is an edge between $I_3^{u}$ and $I_4^{w}$. 
Similarly, there is also an edge between $I_1^{u}$ and $I_6^{w}$.
By using the monotone paths from $u$ and $w$ to the endpoints of these edges, we obtain a stretched cycle containing $u$ and $w$.  Thus every pair in $U\times W$ is universally linked.\\

Since $|U|,|W|\ge 2s$, property (P3), applied to two disjoint $s$-subsets of $U$ and then of $W$, gives an
edge inside each of $U$ and $W$.  The $K_4$-completion lemma (\Cref{lem:K4 completion}) now shows that every pair in $W$ (using the universally linked pair in $U$) and $U$ is universally linked. Consequently, $U\cup W$ is a
universally rigid component of size at least $2(n/6 - m/2) \geq n/6$.\\

It remains to absorb all remaining vertices.  Let $C$ be a maximal universally linked clique containing
$U\cup W$.  Assume for the sake of contradiction that $C\ne V(G)$, and let $A=V(G)\setminus C$.  Since $|C|\ge |U\cup W| \geq n/6$, we have
$1\le |A|\le 5n/6$.  By (P4), some $v\in A$ has two neighbours $x,y\in C$.  
Hence, by \Cref{lem:K4 completion}, $C\cup\{v\}$ is a universally linked clique.
This contradicts the maximality of $C$, thus we have $C=V(G)$ and the framework is universally rigid.
\end{proof}

We will now show that suitable Erdős–Rényi random graphs satisfy the assumptions of \Cref{lem:deterministic}.
We start by setting up the parameters.
Choose constants satisfying the following:
\begin{equation*}
    C \gg H \gg S \gg 1.
\end{equation*}
With these constants we define the following parameters:
\begin{align*}
    p := C\frac{\log n \cdot \log \log n}{n}, \qquad
    K:=\lceil 4\log n\rceil,\qquad
    m:=\left\lfloor\frac{n}{6K}\right\rfloor,\\
    s:=\left\lceil\frac{Sn}{C\log n}\right\rceil, \qquad
    q:=\frac{np}{100K}, \qquad 
    h:=H  \log \log n
\end{align*}

\noindent For these parameters, we obtain the following:
\begin{lemma}\label{lem:random-properties}
The Erdős–Rényi random graph $G \sim G(n,p)$ satisfies the following properties w.h.p.
\begin{enumerate}
\item[(Q1)] For every set $A$ with $m/2\le |A|\le m$, fewer than $m/4$ vertices of $V(G)\setminus A$ have
fewer than $2q$ neighbours in $A$.
\item[(Q2)] Every non-empty set $A$ with $|A|\le 2s$ spans fewer than $h|A|$ edges.
\item[(Q3)] Every two disjoint sets of size $s$ have at least $S\log \log n \cdot s/2$ edges between them.
\item[(Q4)] Every nonempty set $A$ with $|A|\le 5n/6$ contains a vertex with at least three neighbours in
$V(G)\setminus A$.
\end{enumerate}
\end{lemma}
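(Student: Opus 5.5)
Each of (Q1)--(Q4) is a first-moment computation: a union bound over the relevant sets combined with the Chernoff bound (\Cref{lem:Chernoff}) or a direct binomial tail estimate. Recall the scales: $np = C\log n\log\log n$, $K\approx 4\log n$, $m\approx n/(24\log n)$, $s\approx Sn/(C\log n)$, $q = np/(100K)\approx (C/400)\log\log n$ and $h = H\log\log n$. I would verify the four properties separately and finish with a union bound over the four failure events.

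\textbf{(Q3).} For disjoint $A,B$ with $|A|=|B|=s$, the count $e(A,B)\sim\mathrm{Bin}(s^2,p)$ has mean $s^2p \approx S^2 n\log\log n/(C\log n)\cdot S/1$. More precisely, $sp\approx S\log\log n$, so the mean is $\approx S\log\log n\cdot s$. The lower Chernoff tail with $t=\lambda/2$ bounds the probability of fewer than $\lambda/2$ edges by $\exp(-s^2p/8)$. The number of pairs $(A,B)$ is at most $\binom{n}{s}^2\le \exp(2s\log(en/s))$, and $\log(en/s)\approx \log\log n+O(\log(C/S))$. Since $s^2p/8\approx sS\log\log n/8$, the union bound succeeds once $S$ is a large absolute constant.

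\textbf{(Q2).} For a fixed $A$ with $|A|=a\le 2s$, I would bound $\Pr[e(A)\ge ha]\le \binom{a^2/2}{ha}p^{ha}\le (eap/(2h))^{ha}$. Multiplying by $\binom{n}{a}\le (en/a)^a$ gives a term $\bigl[(en/a)(eap/2h)^h\bigr]^a$. Here $ap\le 2sp\approx 2S\log\log n$, while $h=H\log\log n$, so $eap/(2h)\le eS/H<e^{-2}$ when $H\gg S$. The power $h$ then gives a factor at most $(\log n)^{-2H}$. This small-set range is where I expect the real difficulty. For $a$ close to $2s$ the factor $en/a\approx C\log n/S$ is absorbed by $(\log n)^{-2H}$. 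For small $a$, however, $en/a$ can be as large as $n$, and this is not beaten by $(eap/2h)^h$ when $a$ is constant. In that regime I would use the sharper bound $(ea p/2h)^{h}\le (a\log n/n)^{h}$ up to constants, giving per-$a$ terms $\bigl[(en/a)(O(a\log n\log\log n/(n\log\log n)))^{h}\bigr]^a$. These are tiny because $h\ge 2$ and $n^{-h+1}$ dominates all polylogarithmic factors. I would split into the ranges $a\le n/\log^2 n$ and $a>n/\log^2 n$ and sum over $a$.

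\textbf{(Q1).} Fix $A$ with $m/2\le |A|\le m$. For each $v\notin A$, $d_A(v)\sim \mathrm{Bin}(|A|,p)$ with mean at least $mp/2\approx np/(12K)$, which is about $8q$. The lower Chernoff tail then gives $\Pr[d_A(v)<2q]\le \exp(-c\,np/K)=\exp(-c' C\log\log n)=(\log n)^{-c'C}$. These events are independent across $v$. Hence the probability that some set $B$ of $m/4$ outside vertices are all bad is at most $\binom{n}{m/4}(\log n)^{-c'C m/4}$, and multiplying by $\binom{n}{m}$ choices of $A$ gives at most $\exp\bigl(O(m\log\log n)-c'Cm\log\log n/4\bigr)=o(1)$ for $C$ large.

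\textbf{(Q4).} Suppose the property fails for some $A$ with $|A|\le 5n/6$, so every vertex of $A$ has at most two neighbours outside $A$. I would separate small and large $A$. If $|A|\ge s$, take $B\subseteq V\setminus A$ of size $s$ (possible since $|V\setminus A|\ge n/6$). By (Q3), $e(A',B)\ge S\log\log n\cdot s/2 > 2s$ for any $s$-subset $A'\subseteq A$, contradicting that each vertex of $A'$ sends at most two edges outside $A$. If $|A|<s$, note first that minimum degree at least $np/2$ holds w.h.p.\ by Chernoff and a union bound. Then $e(A)\ge |A|(np/2-2)/2$, but (Q2) gives $e(A)<h|A|$. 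This is a contradiction because $np\gg h$ when $C\gg H$. So (Q4) follows deterministically from (Q2), (Q3) and the minimum degree bound.
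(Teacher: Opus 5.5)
Your proposal is correct. For (Q1), (Q2) and (Q3) it is the same first-moment argument as the paper's: a tail bound for a single vertex, set or pair of sets, independence across vertices, and a union bound whose entropy cost is $\exp(O(m\log\log n))$, respectively $\exp(O(s\log\log n))$.

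One correction in (Q2): the small-$a$ range is not a real difficulty. Your remark that for constant $a$ the factor $en/a$ is ``not beaten by $(eap/2h)^h$'' is inaccurate; only your crude bound $(eS/H)^h$ fails there. In fact $(en/a)(eap/2h)^h=en\,(ep/2h)^h\,a^{h-1}$ is increasing in $a$, which is exactly the paper's observation that $f$ is increasing. So the estimate at $a=2s$, namely $O(\log n)\cdot(eS/H)^h=o(1)$, bounds every term and no split is needed. If you keep your split at $n/\log^2 n$, justify the lower range via $(a/n)^{h-1}\le(\log n)^{-2(h-1)}$, which gives a bracket of order $(eC/2H)^h(\log n)^{2-h}=o(1)$. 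The heuristic that $n^{1-h}$ dominates only covers $a=n^{o(1)}$.

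The genuinely different step is (Q4). The paper proves it directly. For fixed $B$, the outside degrees of its vertices are independent binomials with mean at least $np/6$, so all of them are at most $2$ with probability at most $e^{-npb/20}$. This beats $\binom{n}{b}$ because $np\gg\log n$.

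You instead deduce (Q4) deterministically from (Q2), (Q3) and a minimum-degree event. For $|A|\ge s$, (Q3) applied to $s$-subsets of $A$ and of $V\setminus A$ forces more than $2s$ crossing edges. For $|A|<s$, minimum degree $np/2$ forces every vertex of $A$ to have at least $np/2-2$ neighbours inside $A$. Hence $e(A)\ge |A|(np/2-2)/2>h|A|$, contradicting (Q2).

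Both routes work. The paper's is self-contained and needs no extra event. Yours replaces a second union bound over sets of linear size with the standard minimum-degree bound. It also shows that (Q4) is a structural consequence of the local sparsity in (Q2) together with the bipartite density in (Q3).
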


\begin{proof}
We verify the four properties separately. \\

\paragraph{Property (Q1).}
Let $\mathcal{E}_1$ be the event that (Q1) fails.
If $\mathcal{E}_1$ holds, then there are disjoint sets $A,T$ with $m/2\le |A|\le m$ and $|T|= \lceil m/4 \rceil$ such that every vertex of $T$ has fewer than $2q$ neighbours in $A$.  For fixed $A,T$, each such degree follows a
binomial distribution with mean at least $p m/2$, and $2q\le pm/8$ for large $n$.  Chernoff's inequality (see \Cref{lem:Chernoff}) and independence
of the edge sets incident from distinct vertices of $T$ into $A$ give
\[
 \mathbb{P}\bigl(d_A(v)<2q\text{ for all }v\in T\bigr)
 \le \exp(-\Omega(pm|T|))\le \exp(-\Omega(pm^2)).
\]
Using $\binom{N}{t} \leq (eN/t)^t$, the number of possible pairs $(A,T)$ is at most
\[
 \sum_{i=m/2}^m \binom{n}{i} \binom{n}{m/4} = \exp\bigl(O(m\log(n/m))\bigr)=\exp(O(m \log \log n)).
\]
Thus by union bound, and using $pm^2=\Theta(Cm  \log \log n)$, we obtain
\begin{align*}
    \mathbb{P}(\mathcal{E}_1) \leq \exp(O(m \log \log n)-\Omega(pm^2)) = \exp(O(m \log \log n)-\Omega(Cm \log \log n)).
\end{align*}
For $C$ sufficiently large the RHS tends to zero. This proves the desired bound. \\

\paragraph{Property (Q2).}
Let $\mathcal{E}_2$ be the event that (Q2) fails.
Fix $a\le 2s$.  If $a<2h$, then $ha>\binom a2$, so no $a$-set can span $ha$ edges.  Assume $a\ge 2h$.
For a fixed $a$-set $A$, we have
\[
 \mathbb{P}(e(A)\ge ha) 
 \le \binom{\binom{a}{2}}{ha}p^{ha} \leq \left(\frac{eap}{2h}\right)^{ha}.
\]
Thus by union bound we have
\[
 \mathbb{P}(\mathcal{E}_2) \leq \sum_{2h \leq a \leq 2s}\binom{n}{a}\left(\frac{eap}{2h}\right)^{ha} \leq \sum_{2h \leq a \leq 2s}\left[\frac{en}{a}\left(\frac{eap}{2h}\right)^h\right]^a = \sum_{2h \leq a \leq 2s} \exp(a f(x_a)),
\]
where $x_a=ap/\log\log n$ and $f(x)=\log\frac{eC\log n}{x}+H \log \log n\log\frac{ex}{2H}$.  
For large enough $n$, note that $f$ is increasing, and thus $f(x_a) \leq f(2S+o(1))$.  
Since $H \gg S $, we have $f(2S+o(1)) \leq -cH \log \log n$, for some constant $c>0$.
Thus 
\[
 \mathbb{P}(\mathcal{E}_2) \leq \sum_{2h \leq a \leq 2s} \exp(-cH a \log \log n ) = o(1).
\]

\paragraph{Property (Q3).}
Let $\mathcal{E}_3$ be the event that (Q3) fails.
For fixed disjoint $s$-sets $A,B$, the number of edges between them follows a binomial distribution of mean
\[
 \mu=ps^2 \geq S s \log \log n.
\]
Chernoff's inequality gives
\[
 \mathbb{P}(e(A,B)<\mu/2)\le e^{-\mu/8}.
\]
Thus
\begin{align*}
    \mathbb{P}(\mathcal{E}_3) \leq \binom ns^2e^{-\mu/8} \leq \exp\bigl(O(s \log \log n)-\Omega(Ss \log \log n) \bigr),
\end{align*}
 where we have used the bound $\binom{n}{s} \leq (\frac{en}{s})^s$.
Taking $S$ sufficiently large, the RHS is $o(1)$, as wanted.\\

\paragraph{Property (Q4).}
Let $\mathcal{E}_4$ be the event that (Q4) fails.
Fix $B$ with $1\le b:=|B|\le 5n/6$.  
For every $v\in B$, the number of neighbours of $v$ outside $B$ follows a binomial distribution of mean at least $np/6$.
The relevant edge sets are independent as $v$ varies over $B$, and by Chernoff inequality (see \Cref{lem:Chernoff})
\[
 \mathbb{P}\bigl(d_{V\setminus B}(v)\le 2\bigr)\le \exp(-np/20),
\]
for large $n$.  Hence the probability that every vertex of $B$ has at most two neighbours outside $B$ is at
most $\exp(-npb/20)$.  Summing over all sets $B$ of the relevant size gives
\[
\mathbb{P}(\mathcal{E}_4) \leq \sum_{b=1}^{5n/6}\binom nb e^{-npb/20}
 \le \sum_{b\ge1}\exp\bigl(b\log(en/b)-npb/20\bigr)=o(1),
\]
because $np=C\log n\log\log n\gg\log n$. This finishes the proof.
\end{proof}

\begin{proof}[Proof of \Cref{thm:Universal rigidity of random graph result}]
Using the parameters defined above, \Cref{lem:random-properties} shows that $G \sim G(n,p)$ satisfies (Q1)-(Q4) w.h.p.
Recalling that $h < q/3$, we note that these properties trivially imply (P1)-(P4) of \Cref{lem:random graph rigid}.
The necessary inequalities in \Cref{lem:random graph rigid} follow directly from the definitions of the parameters, when $n$ is chosen large enough.
Hence \Cref{lem:random graph rigid} guarantees that $G\sim G(n,p)$ is 1-universally rigid w.h.p.
\end{proof}

We are also able to prove \Cref{thm:high girth result} in a similar way, where we only need to also account for some edge deletions.

\begin{proof}[Proof of \Cref{thm:high girth result}]
Fix an integer $g$ for which we will prove the existence of a 1-universally rigid graph of girth at least $g$.
We use the parameters from above, where we may assume that $n$ is large enough w.r.t. all of them including $g$.\\
For our probabilistic proof we define a graph distribution $H_g(n,p)$, where $H \sim H_g(n,p)$ is obtained by first sampling $G \sim G(n,p)$ and then randomly removing one edge from each cycle of length shorter than $g$.
We will prove that such an $H \sim H_g(n,p)$, which obviously has girth at least $g$, is still 1-universally rigid w.h.p.
Let $G \sim G(n,p)$ be the graph $H$ was obtained from.
By \Cref{lem:random-properties}, we may assume that $G$ satisfies the properties (Q1)-(Q4).
To establish \Cref{thm:high girth result}, we will show that in this case $H$ satisfies (P1)-(P4) of \Cref{lem:random graph rigid} w.h.p.\\

First note that (P2) trivially follows from (Q2), as edge deletions can only lower edge counts.
To establish the other properties, we claim that each vertex in $G$ lies in at most one cycle shorter than $g$ w.h.p.
This means that w.h.p. at most one incident edge of each vertex is deleted, hence (P1), (P3) and (P4) follow from (Q1), (Q3) and (Q4) respectively, by noting that $q \gg 1$ and $S\log \log n \cdot s/2 \gg s$.\\
To prove the claim, let $\mathcal{E}$ be the event that there exists a vertex that lies on two distinct cycles of length smaller than $g$.
Taking the union of these two cycles, we obtain a graph consisting of $a \leq 2g-3$ vertices and at least $a+1$ edges.
For a fixed set of $a$ vertices, this happens with probability at most $c(g)p^{a+1}$, where $c(g)$ is a constant depending only on $g$.
Taking a union bound over all possible sets of size at most $2g-3$ yields
\begin{equation*}
    \mathbb{P}[\mathcal{E}] \leq \sum_{a=1}^{2g-3} \binom{n}{a}c(g)p^{a+1} \leq c(g)p \sum_{a=1}^{2g-3} (np)^{a} \leq c(g) \frac{(C\log n \log \log n)^{2g-1}}{n} = o(1),
\end{equation*}
establishing the claim.
\end{proof}

\section{Minimal density of universally rigid graphs in $\mathbb{R}$}\label{sec:Deterministic part}

This section is devoted to proving \Cref{thm:Main-Deterministic}.
We will use the following edge-replacement operation.
Let $H_1$ be a graph, let $uv\in E(H_1)$, and
let $H_2$ be a graph with two distinct distinguished
vertices $a,b$, called its terminals.
We obtain $G$ by deleting the edge $uv$ from $H_1$
and adding a copy of $H_2$, identifying $a$ with
$u$ and $b$ with $v$.
We call this an \emph{edge-replacement}.
This edge-replacement operation has some interesting properties with respect to 1-universal rigidity, as shown in the following lemma.

\begin{lemma}\label{lem:edge-replacement}
Let $G$ be obtained from $H_1$ by replacing an
edge $uv$ with a graph $H_2$ with terminals $a,b$.
Then the following hold:
\begin{enumerate}
    \item If $H_1$ is not 1-universally rigid, then $G$ is not 1-universally rigid.
    \item If $H_1$ and $H_2$ are both
    1-universally rigid, then so is $G$.
\end{enumerate}
\end{lemma}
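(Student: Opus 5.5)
We prove the two parts separately. Both rest on one observation: a realization of $G$ restricts to realizations of $H_1$ (with the edge $uv$ added back) and of $H_2$. Throughout, \emph{generic} means the coordinates are algebraically independent over $\mathbb{Q}$. A subset of generic coordinates is again generic, and generic points on the line are pairwise distinct.

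\emph{Part (1).} Suppose $H_1$ is not 1-universally rigid. Then there is a generic $\Phi_1\colon V(H_1)\to\mathbb{R}$ and a $d'$-dimensional framework $(H_1,\Psi_1)$ that is equivalent to $(H_1,\Phi_1)$ but not congruent to it. We extend $\Phi_1$ to a map $\Phi$ on $V(G)$ by placing the internal vertices of the copy of $H_2$ at reals algebraically independent over $\mathbb{Q}(\Phi_1(V(H_1)))$, so $\Phi$ is generic.

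Since $uv\in E(H_1)$, we have $\lVert\Psi_1(u)-\Psi_1(v)\rVert=|\Phi(u)-\Phi(v)|$. Hence there is an isometry $T$ of $\mathbb{R}^{d'}$, with $\mathbb{R}\subset\mathbb{R}^{d'}$, such that $T(\Phi(u))=\Psi_1(u)$ and $T(\Phi(v))=\Psi_1(v)$. Define $\Psi=\Psi_1$ on $V(H_1)$ and $\Psi=T\circ\Phi$ on the internal vertices of $H_2$. On the copy of $H_2$, including its terminals, $\Psi$ agrees with $T\circ\Phi$, so every edge of $H_2$ keeps its length. Every edge of $H_1-uv$ keeps its length because $\Psi_1$ is equivalent to $\Phi_1$. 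So $(G,\Psi)$ is equivalent to $(G,\Phi)$. Some pair of vertices of $H_1$ has a different distance under $\Psi_1$ than under $\Phi_1$, and that pair lies in $V(G)$, so $(G,\Psi)$ is not congruent to $(G,\Phi)$. Therefore $G$ is not 1-universally rigid.

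\emph{Part (2).} Fix a generic $\Phi$ of $G$ and an equivalent $d'$-dimensional framework $(G,\Psi)$.
\begin{enumerate}
\item The restriction of $\Phi$ to the copy of $H_2$ is generic, and the restriction of $\Psi$ to it is equivalent. By universal rigidity of $H_2$, all pairwise distances inside $V(H_2)$ are preserved. In particular $u$ and $v$ are universally linked.
\item It follows that $\Psi|_{V(H_1)}$ is equivalent to $\Phi|_{V(H_1)}$ as frameworks of $H_1$, since the edge $uv$ keeps its length. By universal rigidity of $H_1$, all distances inside $V(H_1)$ are preserved.
\item It remains to handle pairs $x\in V(H_1)\setminus\{u,v\}$ and $y$ an internal vertex of $H_2$. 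Because $\Psi(V(H_1))$ is congruent to a collinear configuration, it lies on one line $\ell$. Since $\Phi(u)\neq\Phi(v)$, this line is the line through $\Psi(u)\neq\Psi(v)$. The same holds for $\Psi(V(H_2))$, so both configurations lie on the common line $\ell$.
\item Parametrize $\ell$ by arc length, so that $\Psi(u)$ and $\Psi(v)$ sit at $\Phi(u)$ and $\Phi(v)$. A point of $\ell$ is uniquely determined by its distances to the two distinct points $\Psi(u)$ and $\Psi(v)$. Hence every vertex $z$ of $G$ is sent to the parameter $\Phi(z)$ in this single isometric identification of $\ell$ with $\mathbb{R}$. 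All pairwise distances are therefore preserved, and $(G,\Psi)$ is congruent to $(G,\Phi)$.
\end{enumerate}

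The main obstacle is step 4 of Part (2): checking that the two separately rigid pieces glue together without a relative reflection. This works because they share two distinct collinear points, $u$ and $v$, which fix both the line and its orientation. Part (1) requires only the routine check that the extended embedding can be chosen generic.
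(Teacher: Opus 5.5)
Your proposal is correct and follows the paper's proof. In Part (1) you extend the non-congruent pair by placing the internal vertices of $H_2$ via an isometry matching $u,v$. In Part (2) you use rigidity of $H_2$ to recover $\lVert\Psi(u)-\Psi(v)\rVert$, then rigidity of $H_1$, then glue the two collinear pieces along the two distinct shared points $u,v$; you make the genericity of the extension and the absence of a relative reflection more explicit than the paper does.
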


\begin{proof}
For the first statement, since $H_1$ is
not 1-universally rigid, then there exists a generic line realization
$\Phi$ of $H_1$ and an equivalent, non-congruent
realization $\Psi$ in some Euclidean space.
Extend $\Phi$ to a generic line realization of
$G$ by embedding the internal vertices of $H_2$ arbitrarily
at generic positions.
Since $uv\in E(H_1)$, we have
\[
    |\Phi(u)-\Phi(v)|
    =\|\Psi(u)-\Psi(v)\|.
\]
Hence there exists an isometry from the line
of $\Phi$ to the line through $\Psi(u)$ and
$\Psi(v)$ that maps $\Phi(u)$ to $\Psi(u)$
and $\Phi(v)$ to $\Psi(v)$.
Using this isometry to place the internal
vertices of $H_2$, we extend $\Psi$ to an
equivalent realization of $G$.
The two realizations are not congruent,
since their restrictions to $H_1$ are not.
Thus $G$ is not 1-universally rigid.

For the second statement, suppose that both
$H_1$ and $H_2$ are 1-universally rigid.
Let $\Phi$ be a generic line realization of
$G$ and let $\Psi$ be any equivalent realization.
Since $H_2$ is 1-universally rigid, its
restriction is congruent to its realization
in $\Phi$. In particular,
\[
    \|\Psi(u)-\Psi(v)\|
    =|\Phi(u)-\Phi(v)|.
\]
Thus the restriction of $\Psi$ to $H_1$,
with the edge $uv$ restored, is equivalent
to the restriction of $\Phi$ to $H_1$.
Since $H_1$ is 1-universally rigid, these
restrictions are congruent.
Moreover, the realizations of $H_1$ and
$H_2$ in $\Psi$ are both congruent to their
line realizations in $\Phi$ and share the two
distinct vertices $u,v$.
Therefore, they lie on the same line and
together form a realization congruent to
$\Phi$. 
Hence $G$ is 1-universally rigid.
\end{proof}

\noindent The following figure shows an example application of \Cref{lem:edge-replacement}.

\begin{figure}[H]
\centering
\label{fig:(K_3,K_4-e)}
\begin{tikzpicture}[scale=1.2]

% Colours
\definecolor{mygreen}{RGB}{45,150,140}
\definecolor{myorange}{RGB}{245,120,60}
\definecolor{myblue}{RGB}{55,70,155}
\definecolor{myblack}{RGB}{20,30,60}

%%%%%%%%%%%%%%%%%%%%%%%%%%%%%%%%%%%%%%%%%%%%%%%%%%
%% TOP LEFT TRIANGLE
%%%%%%%%%%%%%%%%%%%%%%%%%%%%%%%%%%%%%%%%%%%%%%%%%%

\coordinate (A1) at (2,0);
\coordinate (A2) at (6,0);
\coordinate (A3) at (4,3);

\draw[thick] (A1)--(A3);
\draw[thick] (A3)--(A2);
\draw[mygreen, thick] (A1)--(A2);

\fill[myblack] (A1) circle (2pt);
\fill[myblack] (A2) circle (2pt);
\fill[myblack] (A3) circle (2pt);

\node[below left] at (A1) {$v_1$};
\node[below right] at (A2) {$v_2$};
\node[above] at (A3) {$v_3$};

%%%%%%%%%%%%%%%%%%%%%%%%%%%%%%%%%%%%%%%%%%%%%%%%%%
%% TOP RIGHT GRAPH
%%%%%%%%%%%%%%%%%%%%%%%%%%%%%%%%%%%%%%%%%%%%%%%%%%

\begin{scope}[xshift=8cm]

% Outer vertices
\coordinate (B1) at (0,0);
\coordinate (B2) at (4,0);
\coordinate (B3) at (2,3);

% Green internal
\coordinate (C1) at (2,0.4);
\coordinate (C2) at (2,-0.4);

% Black edges
\draw[thick] (B1)--(B3);
\draw[thick] (B3)--(B2);

% Green edges
\draw[mygreen, thick] (B1)--(C1);
\draw[mygreen, thick] (C1)--(B2);
\draw[mygreen, thick] (C1)--(C2);
\draw[mygreen, thick] (C2)--(B1);
\draw[mygreen, thick] (C2)--(B2);

% Vertices
\fill[myblack] (B1) circle (2pt);
\fill[myblack] (B2) circle (2pt);
\fill[myblack] (B3) circle (2pt);

\fill[mygreen] (C1) circle (2pt);
\fill[mygreen] (C2) circle (2pt);

\node[below left] at (B1) {$v_1$};
\node[below right] at (B2) {$v_2$};
\node[above] at (B3) {$v_3$};

\end{scope}

\end{tikzpicture}
\caption{\Cref{lem:edge-replacement} applied to $H_1 = K_3$ (shown on the left with its replaced edge colored green) and $H_2 = K_4-e$. As $H_1$ and $H_2$ are both 1-universally rigid, so is the graph obtained on the right.}
\end{figure}

\noindent We now show the following result.

\begin{lemma}\label{deg3triangle}
    Let \(G=(V,E)\) be a graph of minimum degree three, with $G \neq K_4$. 
    Suppose \(G\) contains three vertices $v_1$, $v_2$ and $v_3$ which form a triangle in $G$, each of degree three, and whose neighbours outside the triangle are also of degree three. Then \(G\) is not 1-universally rigid.
\end{lemma}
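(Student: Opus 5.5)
The plan is to argue through the stress characterisation of universal rigidity, the same tool the authors mention for \Cref{G1acyclic}, and not through an explicit geometric flex. By the theorem of Gortler and Thurston \cite{gortler2014characterizing}, a generic framework $(G,\Phi)$ on the line with $n$ vertices is universally rigid if and only if it has an equilibrium stress $\omega$ whose stress matrix $\Omega$ is positive semidefinite of rank $n-2$. For such an $\Omega$ the kernel is exactly $\mathrm{span}(\one,x)$, where $x=\Phi$. So assume $G$ is 1-universally rigid, take a generic $x$ and such an $\Omega$, and aim for a contradiction. The contradiction will be a vector $z\notin\mathrm{span}(\one,x)$ with $z^{T}\Omega z\le 0$. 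Positive semidefiniteness then forces $\Omega z=0$, contradicting the rank condition.

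\textbf{Step 1 (local equilibrium).} Name the outside neighbours $u_1,u_2,u_3$ of $v_1,v_2,v_3$ and write out the equilibrium equations at the six degree-three vertices $v_1,v_2,v_3,u_1,u_2,u_3$. At a degree-three vertex $w$ with neighbours $a,b,c$, equilibrium on the line reads $\omega_{wa}(x_w-x_a)+\omega_{wb}(x_w-x_b)+\omega_{wc}(x_w-x_c)=0$. This is one linear relation among three stresses, so the stresses around $w$ form a $2$-dimensional family. Combining the three equations at the $v_i$ with the triangle, I expect to express the triangle stresses $\omega_{v_iv_j}$ linearly in terms of the pendant stresses $\omega_{v_iu_i}$. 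Genericity of $x$ rules out degenerate coincidences here.

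\textbf{Step 2 (choice of test vector).} The vector $z$ will be supported on $\{v_1,v_2,v_3,u_1,u_2,u_3\}$, or else of the form $z=y$ there and $z=\alpha\one+\beta x$ elsewhere. On the triangle I will take $z$ affine in $x$, with a different affine function on the $u_i$. Then $z^{T}\Omega z=\sum_{ab\in E}\omega_{ab}(z_a-z_b)^2$ only picks up contributions from the triangle edges, the pendant edges $v_iu_i$, and the two remaining edges at each $u_i$. Using the equilibrium relations from Step 1, I intend to rewrite this quadratic form as a small explicit expression in the few free stress parameters. The remaining freedom in choosing $y$, up to six values, is then used to make the expression nonpositive. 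Taking $y$ equal to $x$ on the triangle and to a reflected or shifted copy on the $u_i$ seems the natural first attempt.

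\textbf{Step 3 (where $G\ne K_4$ enters).} The hypothesis $G\neq K_4$ guarantees that the $u_i$ are not all the same vertex with the triangle, i.e.\ that $\{v_1,v_2,v_3,u_1,u_2,u_3\}$ is not all of $G$. This ensures that $z$ is genuinely independent of $\one$ and $x$. One must also separately treat the coincidences $u_1=u_2$ and the like, which should reduce the graph to a degree-2-extension-like configuration. In those subcases I would instead use \Cref{lem:edge-replacement}(1), or the converse direction of \Cref{lem:Deg-2-Extension}, to reduce to a smaller non-rigid graph.

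The main obstacle is Step 2. It requires finding a single choice of $y$ that makes the quadratic form nonpositive for every admissible sign pattern of the stresses, and the degree-three hypothesis on the $u_i$ is exactly what should limit the number of free parameters enough for this to work. If no uniform choice exists, I would fall back on a case split according to the order of $x_{v_i}$ and $x_{u_i}$ on the line. In each case I would pick $z$ adapted to that order, mirroring how the sign pattern of 1D stresses is dictated by the ordering.
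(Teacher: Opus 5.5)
There is a genuine gap, and it lies in the structure of the plan rather than in a missing computation. You may choose $x$ in a proof by contradiction, but your plan never uses that freedom, and your fallback explicitly aims to reach a contradiction for \emph{every} ordering of the $x_{v_i},x_{u_i}$. That cannot succeed. In $\mathbb{R}^1$, universal rigidity of a generic framework depends on the ordering, and the conclusion of the lemma is false for some orderings. Take $G=B_3$, the triangular prism with triangles $v_1v_2v_3$, $u_1u_2u_3$ and rungs $v_iu_i$; it satisfies every hypothesis. Suppose $x_{v_1}<x_{v_2}<x_{u_2}<x_{u_1}$. Then $v_1v_2u_2u_1v_1$ is a stretched $4$-cycle, and $K_4$-completions absorb first $v_3$ (adjacent to $v_1,v_2$) and then $u_3$ (adjacent to $u_1,u_2$). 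So this framework is universally rigid and admits a PSD stress matrix of rank $n-2$, and no test vector $z$ exists for it. The proof must therefore \emph{choose} a non-universally-rigid placement, and your plan contains no idea for producing one.

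Step 2 also cannot work as formulated, independently of the ordering issue. For fixed $z$, the quantity $z^{T}\Omega z=\sum_{ab}\omega_{ab}(z_a-z_b)^2$ is linear in the stresses. The equilibrium equations cut out a linear space of stresses that is invariant under $\omega\mapsto-\omega$. Hence a $y$ making the form nonpositive for all stresses allowed by local equilibrium must make it vanish on all of them. Any usable sign information has to come from $\Omega\succeq 0$ itself, which you invoke only in the final line. Even the device of \Cref{G1acyclic} (a degenerate limit placement plus positive semidefiniteness of the principal submatrix on the cycle) does not close for $k=3$. A collinear triangle carries a rank-one PSD stress matrix, which is exactly why triangles need separate treatment.

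The paper avoids stresses here entirely. When $u_1,u_2,u_3$ are distinct, it takes a generic line realization of $B_3$ that is \emph{not} universally rigid, a fact imported from the literature. It identifies the two triangles of $B_3$ with $C$ and $\{u_1,u_2,u_3\}$, and extends the non-congruent equivalent realization by placing all other vertices on the line through $\Psi(u_1),\Psi(u_2),\Psi(u_3)$. Since the only edges leaving $C$ are the $v_iu_i$, equivalence is preserved.

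Your treatment of the coincidence cases in Step 3 is also off. The example $B_3$ shows that $G\neq K_4$ does not prevent $\{v_1,v_2,v_3,u_1,u_2,u_3\}=V$. That hypothesis is used only when $u_1=u_2=u_3$: then $C\cup\{u_1\}$ is a $K_4$-component and $G$ is disconnected. When $u_1=u_2\neq u_3$, no vertex has degree $2$, so the converse of \Cref{lem:Deg-2-Extension} is unavailable. Instead one uses $\deg u_1=3$: let $x$ be the third neighbour of $u_1$. If $x=u_3$, then $G$ is not $2$-connected. Otherwise $\{v_3u_3,u_1x\}$ is a $2$-edge cut, so $G$ arises from the $4$-cycle $u_1v_3u_3x$ by two edge-replacements and \Cref{lem:edge-replacement}(1) applies.
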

\begin{proof}
    Let \(C = \{v_1,v_2,v_3\}\) be the given triangle of degree 3 vertices. For \(i=1,2,3\), let \(u_i\) be the neighbour of \(v_i\) not in \(C\). By symmetry, it suffices to deal with the three following cases. Note that only case 2 requires the assumption that the \(u_i\) have degree three.

\textbf{Case 1: \(u_1=u_2=u_3\).} 
In this case $v_1,v_2,v_3$ and $u_1$ form a $K_4$.
As $G \neq K_4$ and $v_1,v_2,v_3$ and $u_1$ are all of degree 3, they form an connected component in $G$.
Hence $G$ is not connected and cannot be 1-universally rigid.

\textbf{Case 2: \(u_1=u_2 \neq u_3\).} Let $x$ be the neighbour of \(u_1\) that is not in $C$. If $x=u_3$, then $G$ is not 2-connected and therefore not 1-universally rigid. Thus $x \neq u_3 $ and \(G\) can be obtained from the 4-cycle \(u_1v_3u_3xu_1\) by replacing the edge \(u_1v_3\) with \(G[C\cup\{u_1\}]\) and the edge \(u_3x\) with \(G[V\setminus(C\cup\{u_1\})]\). Since the 4-cycle is not 1-universally rigid, by \Cref{lem:edge-replacement}, we have that \(G\) is not 1-universally rigid.

\textbf{Case 3: \(u_1, u_2, u_3\)} are all distinct. Let $B_3$ be the graph consisting of two copies of \(K_3\) connected by 3 disjoint edges. As explained in detail in \cite{chen_universal_2021} and \cite{chen_universal_2022}, the graph $B_3$ is not 1-universally rigid, and thus there are two equivalent, non-congruent frameworks \((B_3,\Phi)\) and \((B_3,\Psi)\), where \(\Phi\) is generic and embeds $B_3$ on the real line. We can therefore also find two equivalent, non-congruent frameworks \((G,\Phi')\), \((G,\Psi')\) where \(\Phi'\) is generic by taking the triangle \(C\) to be one copy of \(K_3\) in \(B_3\) and the neighbours \(u_1, u_2, u_3\) to be the other copy of \(K_3\) and placing all the other vertices at generic points on the line in \(\Phi'\), and at the same position on the line passing through \(u_1, u_2, u_3\) in $\Psi'$.
\end{proof} 

\noindent From the proof of the previous lemma, one can easily deduce the following result.

\begin{corollary}
\label{cor:deg3triangle_case_3}
Let \(G\) be a graph that contains a triangle whose three vertices each have degree three. If their unique neighbours outside the triangle are pairwise distinct, then $G$ is not 1-universally rigid.
\end{corollary}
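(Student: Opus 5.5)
The plan is to rerun Case 3 of the proof of \Cref{deg3triangle}, observing that it never used minimum degree three, $G\neq K_4$, or degree three of $u_1,u_2,u_3$. Only two facts are used: the triangle $C=\{v_1,v_2,v_3\}$ sends exactly one edge $v_iu_i$ out of $C$ from each $v_i$, and the $u_i$ are pairwise distinct.

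Start from the failure of 1-universal rigidity of $B_3$, the prism: two triangles joined by a perfect matching, as in \cite{chen_universal_2021,chen_universal_2022}. This supplies a generic line realization $\Phi$ of $B_3$ and an equivalent, non-congruent realization $\Psi$ in some $\mathbb{R}^{d'}$. Label one triangle of $B_3$ by $v_1,v_2,v_3$ and the other by $u_1,u_2,u_3$, so that the matching edges are the $v_iu_i$. Define $\Phi'$ on $G$ by taking $\Phi$ on these six vertices and generic line positions for all other vertices. Since genericity is an open dense condition, this can be done keeping all coordinates algebraically independent.

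Define $\Psi'$ as follows. Under $\Psi$ the triangle $u_1u_2u_3$ keeps its side lengths, and these are those of a degenerate collinear triangle. Hence $\Psi(u_1),\Psi(u_2),\Psi(u_3)$ are collinear, and there is an isometry $\iota$ from the line of $\Phi$ onto their line with $\iota(\Phi(u_i))=\Psi(u_i)$ for $i=1,2,3$. Set $\Psi'=\Psi$ on $C\cup\{u_1,u_2,u_3\}$ and $\Psi'=\iota\circ\Phi'$ on all remaining vertices.

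Next, check that $\Psi'$ is equivalent to $\Phi'$ edge by edge.
\begin{itemize}
\item Edges inside $C$, and the edges $v_iu_i$, are edges of $B_3$, so $\Psi$ preserves their lengths.
\item Every other edge incident to $C$ is excluded, because each $v_i$ has degree three.
\item The remaining edges have both endpoints in $V\setminus C$, where $\Psi'=\iota\circ\Phi'$, so $\iota$ preserves their lengths.
\end{itemize}
For the last point, note that edges among the $u_i$ themselves, which may or may not be present in $G$, are also preserved, since $\Psi=\iota\circ\Phi$ on $\{u_1,u_2,u_3\}$.

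Finally, $\Psi'$ is not congruent to $\Phi'$: its restriction to the six vertices is $\Psi$, which is not congruent to $\Phi$. Thus $G$ is not 1-universally rigid.

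The main obstacle, which is really just a point to state carefully, is the equivalence check. It requires that no edge of $G$ joins $C$ to $V\setminus(C\cup\{u_1,u_2,u_3\})$, which is exactly the degree-three hypothesis. It also requires that edges of $G$ among the $u_i$ not in $B_3$ cause no trouble, which holds because $\Psi$ on the $u_i$ is a line isometry image of $\Phi$.
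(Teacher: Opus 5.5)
Your proof is correct and follows the paper's argument: the paper proves the corollary by pointing to Case 3 of \Cref{deg3triangle}, which realizes $C$ and $\{u_1,u_2,u_3\}$ as the two triangles of the non-universally-rigid prism $B_3$ and places the rest of $G$ on the line through $\Psi(u_1),\Psi(u_2),\Psi(u_3)$; your edge-by-edge equivalence check spells out why that case needs only the degree-three and distinctness hypotheses. One small wording fix: genericity is not an open condition, but your extension step still works, because the field generated over $\mathbb{Q}$ by the coordinates of $\Phi$ is countable, so you can choose the remaining coordinates algebraically independent over it.
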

\begin{proof}
    This is case 3 of the proof of Lemma \ref{deg3triangle} without unnecessary assumptions.
\end{proof}

\noindent We now prove the following technical lemma, which is at the heart of our proof of \Cref{thm:Main-Deterministic}.

\begin{lemma}
\label{G1acyclic}
    Let $G$ be a graph containing an induced cycle $C=v_1 \dots v_k v_1$ such that $k \geq 4$. Suppose that every vertex $v_i$ has exactly one neighbour $u_i$ outside of $C$, and that $u_i \neq u_{i+1}$ for every $i \in [k]$ (where the indices are considered mod $k$). Then $G$ is not 1-universally rigid.
\end{lemma}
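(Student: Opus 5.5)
The plan is to use the stress characterisation of Gortler and Thurston \cite{gortler2014characterizing} in the necessity direction. If a generic framework $(G,\Phi)$ in $\mathbb{R}^1$ on $n$ vertices is universally rigid, then it has a positive semidefinite equilibrium stress $\Omega$ of rank $n-2$, whose kernel is exactly $\mathrm{span}\{\one,\Phi\}$. So it suffices to fix a generic line realization $\Phi$, write $x_w=\Phi(w)$, and show that \emph{every} equilibrium stress of $(G,\Phi)$ has rank at most $n-3$. Positive semidefiniteness will not even be needed. Concretely, I will show that every equilibrium stress vanishes on all edges incident to the cycle $C$. Then the rows of $\Omega$ indexed by $v_1,\dots,v_k$ are zero, so every indicator vector $e_{v_i}$ lies in $\ker\Omega$. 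Hence $\rank\Omega\le n-k\le n-4$, which rules out rank $n-2$ and shows that $G$ is not 1-universally rigid.

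\textbf{Step 1 (local equilibrium).} Fix $i$. Since $C$ is induced and $v_i$ has exactly one neighbour off $C$, the vertex $v_i$ has degree three, with neighbours $v_{i-1},v_{i+1},u_i$. Equilibrium at $v_i$ on the line gives two linear conditions on the three incident stresses: $\sum_j\omega_{ij}(x_i-x_j)=0$, and $\sum_j\omega_{ij}(x_i-x_j)x_j=0$ (the second is equivalent to the zero row-sum condition together with the first). For generic positions these two conditions are independent. Hence $(\omega_{v_iv_{i-1}},\omega_{v_iv_{i+1}},\omega_{v_iu_i})$ is determined up to one scalar. Solving the $2\times 3$ system gives an explicit ratio
\[
\rho_i:=\frac{\omega_{v_iv_{i+1}}}{\omega_{v_iv_{i-1}}}=-\frac{(x_{v_{i-1}}-x_{v_i})(x_{v_{i-1}}-x_{u_i})}{(x_{v_{i+1}}-x_{v_i})(x_{v_{i+1}}-x_{u_i})},
\]
which is well defined and nonzero for generic $\Phi$.

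\textbf{Step 2 (propagation around the cycle).} Each cycle edge $v_iv_{i+1}$ is shared by two consecutive local systems. Set $t:=\omega_{v_kv_1}$. Step 1 gives $\omega_{v_iv_{i+1}}=\rho_i\,\omega_{v_{i-1}v_i}$, so going once around $C$ we get $t=\bigl(\prod_{i=1}^k\rho_i\bigr)t$. If $\prod_i\rho_i\neq 1$, then $t=0$. Consequently every cycle edge carries zero stress, and by the local relation so does every edge $v_iu_i$. This is exactly the vanishing needed above.

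\textbf{Step 3 (main obstacle: the product is not identically one).} It remains to show that $P(x):=\prod_{i=1}^k\rho_i-1$ is not the zero rational function; genericity of $\Phi$ then gives $P(x)\neq0$. Here the hypothesis $u_i\neq u_{i+1}$ enters, and I expect this step to need the most care. The $u_i$ may repeat non-consecutively, so the variables $x_{u_i}$ are not all independent.

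My first attempt is a degeneration argument. Let $x_{u_1}\to x_{v_2}$. The factor $x_{v_2}-x_{u_1}$ in the denominator of $\rho_1$ then vanishes. I will check which other factors involve $x_{u_1}$: the variable $x_{u_1}$ appears in $\rho_j$ only when $u_j=u_1$. By hypothesis this excludes $j=2$ and $j=k$. The only factors that can vanish in the limit are of the form $x_{v_{j\pm1}}-x_{u_j}$, and with $j\neq 2$ these would require $v_{j\pm1}=v_2$, i.e.\ $j\in\{1,3\}$. For $j=3$ the factor is $x_{v_2}-x_{u_3}$, which sits in the \emph{numerator} of $\rho_3$ if $u_3=u_1$ (this is allowed when $k\ge4$). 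That numerator zero could cancel the pole from $\rho_1$.

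If such cancellation occurs, I will instead degenerate a different variable, or compare leading terms, say by letting $x_{u_1}\to\infty$ and comparing degrees. I expect that some specialisation makes $\prod_i\rho_i$ blow up or tend to $0$, which shows $P\not\equiv0$. Checking this over the possible coincidence patterns among the $u_i$ is where I expect the real work to be.
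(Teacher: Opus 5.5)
There is a genuine gap, and it is in Step 1. On the line, equilibrium at $v_i$ is a single scalar equation, $\sum_j \omega_{ij}(x_i-x_j)=0$. The condition $\Omega\one=0$ only determines the diagonal entry $\Omega_{v_iv_i}=\sum_j\omega_{ij}$ and puts no constraint on the edge stresses. So your second equation $\sum_j\omega_{ij}(x_i-x_j)x_j=0$ is not a consequence of equilibrium; given the first, it is equivalent to $\sum_j\omega_{ij}(x_i-x_j)^2=0$. The three stresses at $v_i$ therefore range over a $2$-dimensional space, $\rho_i$ is not defined, and the transfer around $C$ in Step 2 collapses.

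The claim you are aiming for is in fact false. Set $\omega_{v_iv_{i+1}}:=1/(x_{v_{i+1}}-x_{v_i})$ for every $i$ and $\omega_e:=0$ on all other edges. At each $v_i$ the two nonzero terms are $-1$ and $+1$, so this is a nonzero equilibrium stress that is nonzero on every edge of $C$. More generally, for an injective line framework, $\omega_{vw}\mapsto \omega_{vw}(x_w-x_v)$ identifies the equilibrium stresses with the circulations on $G$, so the stress space is just the cycle space.

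The deeper problem is your remark that positive semidefiniteness will not be needed: that is exactly what separates universal from global rigidity. Without it, your argument would show that a generic line realization of $G$ has no stress of rank $n-2$, i.e.\ that $G$ is not even globally rigid on the line. That conclusion fails for the cube $Q_3$. Taking $C$ to be a face, $Q_3$ satisfies every hypothesis of the lemma. Yet it is $3$-connected, hence generically globally rigid in $\mathbb{R}^1$, so by Gortler--Healy--Thurston its generic line realizations do carry stresses of rank $n-2$. No choice of degeneration in Step 3 can repair this.

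The paper's proof uses positive semidefiniteness in an essential way:
\begin{enumerate}
\item It takes generic placements converging to a special placement $p^0$ with $p^0(u_i)=p^0(v_i)-1$. The hypothesis $u_i\neq u_{i+1}$ is what allows the cycle coordinates to satisfy $x_j\neq x_{j+1}$ with at least two strict ascents and two strict descents.
\item It passes to a limit $M\succeq 0$ of the trace-normalised blocks $\Omega^{(r)}[C,C]$.
\item Summing the limiting equilibrium equations gives $\one^TM\one=0$, so $M\succeq0$ forces the stresses on the edges $v_ju_j$ to vanish.
\item The cycle stresses then satisfy $a_j(x_{j+1}-x_j)=t$ for a constant $t\neq0$, so at least two of them are negative.
\item The indicator vector $z$ of one of the two arcs obtained by deleting those two edges gives $z^TMz<0$, a contradiction.
\end{enumerate}
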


\begin{proof}
Suppose, for a contradiction, that $G$ is $1$-universally rigid, and let $n=|V(G)|$.
By the theorem of Gortler and Thurston~\cite{gortler2014characterizing},
for every generic line placement $p$ of $G$ there is a symmetric
positive-semidefinite equilibrium stress matrix $\Omega$ satisfying
\[
    \Omega\one=0,\qquad \Omega p=0,\qquad
    \rank\Omega=n-2,
\]
with $\Omega_{vw}=0$ whenever $v\ne w$ and $vw\notin E(G)$.
Equilibrium at $v$ gives
\begin{equation}
\label{eq:vertex-equilibrium}
    \sum_{w:\,vw\in E(G)}
    \Omega_{vw}\bigl(p(v)-p(w)\bigr)=0.
\end{equation}

We start with a compactness argument to allow a
non-generic initial embedding.
More precisely, we construct an embedding $p^0$ as follows.
For each distinct vertex $u$ among
$u_1,\ldots,u_k$, choose a real number $y(u)$ such that $y(u)\neq y(w)$ whenever $u\neq w$.
If all $u_i$ are distinct, we take
\[
    y(u_k)=0,\qquad y(u_1)=1,\qquad
    y(u_2)=3,\qquad y(u_3)=2.
\]
For every $i$, set
\[
    x_i:=y(u_i),\qquad
    p^0(v_i)=x_i,\qquad
    p^0(u_i)=x_i-1.
\]
We place the remaining vertices arbitrarily.

Choose generic placements $p^{(r)}\to p^0$, and let $\Omega^{(r)}$
be the corresponding positive-semidefinite stress matrices of rank
$n-2$. 
Let $M^{(r)}=\Omega^{(r)}[C,C]$, that is the submatrix of $\Omega^{(r)}$ corresponding to the vertices of $C$. Note that $M^{(r)}$ is positive semidefinite, and also nonzero, as otherwise this would contradict $\rank \Omega^{(r)}=n-2$.

Rescale the stresses so that $\tr M^{(r)}=1$.
Since the set of positive-semidefinite $k\times k$ matrices of trace one
is compact, by passing to a subsequence, we may assume that
\begin{align*}
%\label{eq:limit}
    M^{(r)}\longrightarrow M,\qquad
    M\succeq0,\qquad \tr M=1.
\end{align*}
Set $a_j=-M_{j,j+1}$ and $b_j=(M\one)_j$, with indices taken modulo $k$.
Since $C$ is induced, we have
\begin{align}
\label{eq:matrix-form}
    M_{jj}=a_{j-1}+a_j+b_j,\qquad
    M_{j,j+1}=M_{j+1,j}=-a_j,
\end{align}
with all other off-diagonal entries zero. 
Since $u_j$ is the unique neighbour of $v_j$ outside $C$,
the zero row sums of $\Omega^{(r)}$ give
\[
    \Omega^{(r)}_{v_j u_j}
    =-(M^{(r)}\one)_j\longrightarrow-b_j.
\]
Thus all entries involved in equilibrium at $v_j$ converge.
Taking the limit in the equation $(\Omega^{(r)} p^{(r)})_j=0 $ and using $p^0(v_j)=x_j$ and $p^0(u_j)=x_j-1$,
we obtain for every $j$ that
\begin{equation}
\label{eq:limit-equilibrium}
    a_{j-1}(x_j-x_{j-1})
    +a_j(x_j-x_{j+1})+b_j=0.
\end{equation}
We now show that all $b_j$ vanish.
We sum \eqref{eq:limit-equilibrium} over all $j$.
Each cycle-edge contribution occurs twice with opposite signs, so
all these contributions cancel, leaving
\[
    \sum_{j=1}^k b_j=0.
\]
Since $M\one=(b_1,\dots,b_k)^T$, we deduce
\[
    \one^TM\one=\sum_{j=1}^k b_j=0.
\]
As $M\succeq0$, we have $M\one=0$, and thus $b_j=0$ for every $j$. \\ 

We now finish the proof by showing that all
$a_j$ vanish.
First, note that our choice of coordinates
ensures that $x_j\neq x_{j+1}$ for every $j$.
We now claim that the cyclic sequence
$x_1,\ldots,x_k$ contains at least two strict
increases and at least two strict decreases. 
Indeed, if all the $u_i$ are distinct, then our explicit choice of $x_k$, $x_1$, $x_2$ and $x_3$ ensures this is the case.
Otherwise, $u_i=u_j$ for
some distinct $i,j$. 
Then $x_i=x_j$, and both paths along the cycle joining
$v_i$ and $v_j$ start and end at the same
coordinate.
Since consecutive coordinates are distinct,
each of these paths must contain at least
one strict increase and one strict decrease.

Since $b_j=0$ for every $j$, by \eqref{eq:limit-equilibrium} we have
\[
    a_{j-1}(x_j-x_{j-1})
    =a_j(x_{j+1}-x_j),
\]
and thus there exists a constant
$t\in\mathbb{R}$ such that
\[
    a_j(x_{j+1}-x_j)=t
\]
for every $j$.
If $t=0$, then $a_j=0$ for every $j$,
since $x_j\neq x_{j+1}$.
Together with $b_j=0$, this implies
$M=0$, contradicting $\tr M=1$.
Thus we may assume that $t\neq 0$.
Since the cyclic sequence $(x_j)$ contains
at least two strict increases and at least
two strict decreases, there are two
distinct indices $r,s$ such that both $a_r$ and $a_s$ are negative.
We now show that this contradicts that $M$ is positive semidefinite.

Recall from \eqref{eq:matrix-form} that,
since $b_j=0$ for every $j$,
\[
    M_{jj}=a_{j-1}+a_j,\qquad
    M_{j,j+1}=M_{j+1,j}=-a_j,
\]
with all other entries zero.
Therefore, for any vector
$z=(z_1,\ldots,z_k)^T\in\mathbb{R}^k$:
\begin{align*}
    z^TMz
    =\sum_{j=1}^k
    (a_{j-1}+a_j)z_j^2
    -2\sum_{j=1}^k a_jz_jz_{j+1}
    =\sum_{j=1}^k a_j
    \left(z_j^2+z_{j+1}^2
    -2z_jz_{j+1}\right)=\sum_{j=1}^k
    a_j(z_j-z_{j+1})^2.
\end{align*}

Now removing the two edges $v_rv_{r+1}$ and $v_sv_{s+1}$ splits $C$ into
two paths.
Let $z\in\{0,1\}^k$ be the indicator
vector of the vertices of one of these paths.
By construction, the endpoints of every
edge of $C$ other than $v_rv_{r+1}$
and $v_sv_{s+1}$ have the same value
under $z$.
Hence
\[
    (z_j-z_{j+1})^2
    =
    \begin{cases}
        1, & j\in\{r,s\},\\
        0, & \text{otherwise}.
    \end{cases}
\]
It follows that
\[
    z^TMz
    =\sum_{j=1}^k
    a_j(z_j-z_{j+1})^2
    =a_r+a_s<0,
\]
which contradicts that $M$ is positive semidefinite, and finishes the proof.
\end{proof}

\noindent We are now ready to prove \Cref{thm:Main-Deterministic}.

\begin{proof}[Proof of \Cref{thm:Main-Deterministic}]
    Let $G=(V,E)$ be a 1-universally rigid graph on at least 7 vertices.
    Let $H$ be a graph obtained by successive deletions of degree 2 vertices in $G$, such that the graph $H$ contains no vertex of degree 2.
    By \Cref{lem:Deg-2-Extension}, $H$ is still 1-universally rigid, which results in 3 cases: Either $H = K_2$, or $H$ has minimum degree 3 and at most 6 vertices, or $H$ has minimum degree 3 and at least 7 vertices.
    In the first two cases we have $|E(H)| \geq 2|V(H)|-3$ and since $G$ is obtained from $H$ by degree-2 extensions we also have $|E| \geq 2|V|-3 \geq (3/2 + 1/26)|V|$ for \(|V| \geq 7\).
    In the last case we may continue the proof assuming that all vertices in $G$ already had degree at least 3.

    We set \(S:= \{v \in V | d(v)\geq 4\}\), \(S' := N(S) \cup S\) and \(V_1 := V\backslash S'\). 
    Let \(m:=\sum_{v \in V} (d(v) -3) = 2|E| - 3|V|\) be the number of 'extra degrees' that \(G\) has compared to a 3-regular graph on the same number of vertices. 
    Note that $m = \sum_{v \in S} (d(v) -3) \geq \sum_{v \in S} 1 =|S|$, and 
    \(|N(S)| \leq \sum_{v \in S} d(v)  = m+3|S| \leq 4m\).  
    Therefore 
    \begin{align}
    \label{eq:Ineq-V1}
        |V_1| \geq |V| - |S| -|N(S)| \geq |V|-5m.
    \end{align}
    Let $G_1$ be the graph induced by $G$ on $V_1$. 
    We now make the following claim.

    \begin{claim}
    \label{cl:G1acyclic}
        $G_1$ is acyclic.
    \end{claim}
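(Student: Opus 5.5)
Suppose for contradiction that $G_1$ contains a cycle, and take a \emph{shortest} cycle $C=v_1\dots v_kv_1$ in $G_1$; a shortest cycle is automatically induced in $G_1$. Since $V_1=V\setminus S'$ and $S'\supseteq N(S)\cup S$, every vertex of $V_1$ has degree exactly $3$ in $G$, and moreover all its neighbours lie outside $S$, hence also have degree exactly $3$. Because $V_1$ avoids $S$, no vertex of $C$ has a neighbour in $S$, so all chords of $C$ in $G$ would be edges of $G_1$; thus $C$ is also induced in $G$. As each $v_i$ has degree $3$ and two neighbours on $C$, it has exactly one neighbour $u_i$ outside $C$. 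We then split into the cases $k=3$ and $k\ge4$.

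\textbf{Case $k\ge 4$.} I would aim to invoke \Cref{G1acyclic}, so the remaining hypothesis is $u_i\neq u_{i+1}$ for all $i$. Suppose $u_i=u_{i+1}=:u$. Then $v_iv_{i+1}u$ is a triangle. Here $u$ is a neighbour of $v_i\in V_1$, so $u\notin S$ and $d(u)=3$. All three vertices then have degree $3$, and I would use a short local argument. Let $w$ be the third neighbour of $u$. If $w\in C$, then $u$ would be adjacent to three cycle vertices. I would rather derive a contradiction directly. The pair $\{v_{i-1}, v_{i+2}, w\}$ together with the triangle $\{v_i,v_{i+1},u\}$ gives a triangle of degree-$3$ vertices whose outside neighbours are $v_{i-1},v_{i+2},w$. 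If these are pairwise distinct, \Cref{cor:deg3triangle_case_3} shows $G$ is not 1-universally rigid, which is a contradiction. Coincidences are handled as follows. We have $v_{i-1}\neq v_{i+2}$ since $k\ge4$. If $w=v_{i-1}$ or $w=v_{i+2}$, then the triangle's outside neighbours are two vertices, one of them with degree $3$. This is Case 2 of \Cref{deg3triangle}: after the edge-replacement reduction to a 4-cycle via \Cref{lem:edge-replacement}, or via a 2-cut, $G$ is not 1-universally rigid. That step needs the degree-3 condition on the repeated neighbour, which holds since $v_{i\pm}$ lie in $V_1$. So we may assume all $u_i\neq u_{i+1}$, and \Cref{G1acyclic} gives the contradiction.

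\textbf{Case $k=3$.} Here $C$ is a triangle of degree-3 vertices lying in $V_1$, so their outside neighbours $u_i$ have degree $3$. Since we are in the case where $G$ has minimum degree $3$ and $|V|\ge7$, we have $G\neq K_4$. \Cref{deg3triangle} then applies directly and gives non-rigidity, a contradiction.

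The main obstacle I expect is the subcase $u_i=u_{i+1}$ in the case $k\ge4$. There I must carefully verify that the degree-3 hypotheses needed by the Case 2 argument of \Cref{deg3triangle} hold for the relevant repeated neighbour, and that no degenerate configuration escapes all cases (for instance $G$ failing 2-connectivity, which also contradicts rigidity). If this gets messy, I would instead reduce it to applying \Cref{deg3triangle} to the triangle $v_iv_{i+1}u$, checking that all of its outside neighbours have degree $3$; this holds because they are neighbours of $V_1$-vertices or of $u$, and the last one may need a separate check.
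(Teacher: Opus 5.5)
Your proof is correct and follows the paper's argument essentially step by step: rule out $k=3$ with \Cref{deg3triangle}, then rule out $u_i=u_{i+1}$ via the triangle $v_iv_{i+1}u_i$ (using \Cref{deg3triangle} when the third neighbour of $u_i$ is $v_{i-1}$ or $v_{i+2}$, and \Cref{cor:deg3triangle_case_3} otherwise), and finish with \Cref{G1acyclic}. Drop the fallback in your last paragraph: the third neighbour $w$ of $u$ may lie in $S$ (since $u$ can be in $N(S)$), so \Cref{deg3triangle} cannot be applied to that triangle in general, which is exactly why the corollary is needed in the pairwise-distinct case.
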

    
\begin{proof}
    Suppose for the sake of contradiction that $G_1$ contains a cycle, and let $C$ be a shortest such cycle. Let $C=v_1v_2\dots v_kv_1$, and let $u_i$ be the unique neighbour of $v_i$ outside of $C$. By \Cref{deg3triangle}, we must have $|C| \geq 4$, as otherwise $G$ would not be 1-universally rigid.\\
Now we further assume that $u_i = u_{i+1}$ for some $i$.
This vertex has exactly 3 neighbours, namely $v_i, v_{i+1}$ and a third vertex $x$.
If $x = v_{i-1}$ (or symmetrically $x = v_{i+2}$), the triangle $v_iu_iv_{i+1}$ satisfies the requirements of \Cref{deg3triangle} and hence $G$ is not 1-universally rigid.
In the remaining case $x \neq v_{i-1}, x \neq v_{i+2}$ we note that also $v_{i-1} \neq v_{i+2}$ as $|C| \geq 4$, hence $G$ is not 1-universally rigid due to \Cref{cor:deg3triangle_case_3}.
We conclude that $u_i \neq u_{i+1}$ for all $i$.\\
Hence $C$ satisfies the requirements of \Cref{G1acyclic} and $G$ is not 1-universally rigid. 
This is a contradiction, so $G_1$ has to be acyclic. 
\end{proof}

Since \(G_1\) is acyclic by \Cref{cl:G1acyclic}, it follows that \(G_1\) has at most \(|V_1|\) edges. 
Since by definition every vertex in \(V_1\) has degree 3 in $G$, it follows that 
\begin{align}
\label{eq:UB-crossedges}
    e_G(S',V_1) = (\sum_{v \in V_1} d(v)) - 2e(G_1) \geq 3|V_1|-2|V_1| \geq  |V|-5m ,
\end{align}
where the last inequality follows by \eqref{eq:Ineq-V1}.
%and \(G_1\) has at most \(|V_1|-1\) edges so \(G_1\) has at least \(|V|-5m+2\) edges to \(S'\). 
Since \(N(S) \cap V_1 = \emptyset\) by definition, we have that $e(S',V_1)=e(N(S),V_1)$. 
Each vertex in \(N(S)\) has degree 3 and at least one neighbour in \(S'\), so that $|N(v) \cap V_1| \leq 2$ for every $v \in N(S)$. Therefore
\begin{align}
\label{eq:LB-crossedges}
    e(S',V_1)=e(N(S),V_1) \leq 2|N(S)| \leq 8m.
\end{align}
Combining \eqref{eq:UB-crossedges} and \eqref{eq:LB-crossedges}, we obtain
\[8m \geq |V|-5m.\]
Plugging in \(m = 2|E|-3|V|\) and rearranging, we obtain
\[|E| \geq (20/13)|V| = (3/2 + \varepsilon)|V|,\]
for \(\varepsilon = 1/26\), as wanted.
\end{proof} 

\section{Open problems}\label{sec:Open problems}

\noindent We finish by stating some open problems.

\subsection*{Sharp threshold for 1-universal rigidity of random graphs.}
In view of \Cref{thm:Universal rigidity of random graph result}, a natural question to ask is whether the bound obtained there on $p$ is tight.
To the best of our knowledge, the only known lower bound on the threshold for $G \sim \calG{n}{p}$ to be 1-universally rigid, is the known threshold for 1-global rigidity, which coincides with the hitting time for minimum degree $2$ by \cite{Hitting_time_higher_dimension}.
This leaves a gap of a factor of $\Theta(\log\log n)$ between the best known lower bound and our upper bound. 
We do not see clear evidence for either of the bounds to be tight. 
Note that the problem remains open for the stronger property that every injective 1-dimensional framework is 1-universally rigid.
As mentioned previously our upper bound \Cref{thm:Universal rigidity of random graph result} still holds for this stronger notion.
However, to our knowledge, the best known lower bound due to \cite{Hitting_time_1-gloabl_rigidity_of_graph} also coincides with the hitting time for minimum degree $2$, leaving the same gap as in the generic case.

\subsection*{Sharp threshold for $d$-universal rigidity of random graphs.}
First, we notice that there is no natural extension of \Cref{thm:Universal rigidity of random graph result} to higher dimensions without imposing a genericity assumption. 
Indeed, one may consider an embedding where $n-2$ points are on a line, while two vertices, say $u$ and $v$, lie outside of this line, so that the distance $uv$ cannot be determined, unless the edge $uv$ belongs to the graph. 
Thus any graph with the property that all of its injective $d$-dimensional frameworks are universally rigid with $d \geq 2$, must be complete.
However, the question posed by Criscitiello \cite{criscitiello2025threshold} remains open in the restricted generic setting.
Let $p_c(n,d)$ be the threshold for $d$-generic universal rigidity.
 The techniques used to prove \Cref{thm:Universal rigidity of random graph result} do not seem to extend directly to higher dimensions, since there is no known sparse higher-dimensional universally rigid analogue of stretched cycles.
 To the best of our knowledge, the sparsest known example of such a graph that is universally rigid in \(\mathbb{R}^d\) is \(K_{d+3}\) with one edge removed. 
 Combining this observation with results on bootstrap percolation in random graphs (see \cite{balogh2012graph}, \cite{bartha2024weakly} and \cite{bartha2025sharp}), we find that $p_c(n,d) =O_d( n^{-1/\eta(d)})$, where $\eta(d):=\frac{d+4}2-\frac1{d+1}$.
For a lower bound, we have $p_c(n,d) = \Omega_d(\log n/n)$, since a $d$-generic universally rigid graph with at least $d+2$ vertices has minimum degree at least $d+1$.
 To the best of our knowledge, those are currently the best known bounds on $p_c(n,d)$. 

\subsection*{1-Universally rigid graphs of arbitrarily large girth.}
As discussed above, \Cref{thm:high girth result} gives a non-constructive proof of the existence of 1-universally rigid graphs of arbitrarily high girth, and thus provides only limited insights concerning the structural properties of such graphs. We believe that it would be interesting to find explicit examples of 1-universally rigid graphs of arbitrarily high girth, which likely was closer in spirit to the initial suggestion by Dantas e Moura, Jord\'an, and Silverman \cite{Generic_Universal_Rigidity_in_R}. 

\subsection*{Minimal number of edges of 1-universally rigid graphs.}
While our bound in \Cref{thm:Main-Deterministic} breaks the natural $3/2$-barrier in this problem arising from degree-2 extensions, our bound remains far from the best-known upper bound of $2|V|-3$.
Nevertheless, our methods show that any lower bound of the form $|E|\geq (2-o(1))|V|$ for 1-universally rigid graphs would imply the tight bound $|E|\geq 2|V|-3$. 
Indeed, suppose there exists a 1-universally rigid graph $G_0$ such that $|E(G_0)|<2|V(G_0)|-3$. Then we can recursively construct a sequence of graphs $G_0,G_1,\ldots$, where $G_i$ is obtained from $G_{i-1}$ by replacing one edge with a copy of $G_0$. 
By \Cref{lem:edge-replacement}, every $G_i$ is 1-universally rigid.
By construction we have $|E(G_i)|=|E(G_0)|+i(|E(G_0)|-1)$ as well as  $|V(G_i)|=|V(G_0)|+i(|V(G_0)|-2)$.
This yields $|E(G_i)|/|V(G_i)| \rightarrow (|E(G_0)|-1)/(|V(G_0)|-2) < 2$, where the assumption $|E(G_0)|<2|V(G_0)|-3$ is used in the very last step.
This would contradict any lower bound of the form $|E|\geq (2-o(1))|V|$.

\bibliographystyle{abbrv}
\bibliography{bib}

\end{document}